\title{\bf Rouquier Complexes are Functorial over Braid Cobordisms}
\author{Ben Elias  and  Daniel Krasner}
\date{}
\theoremstyle{plain}
\newtheorem{theorem}{Theorem}
\newtheorem{lemma}[theorem]{Lemma}
\newtheorem{cor}[theorem]{Corollary}
\newtheorem{claim}[theorem]{Claim}
\newtheorem{notation}[theorem]{Notation}
\newtheorem{defn}[theorem]{Definition}
\def\proof{{\bf {\medskip}{\noindent}Proof. }}
\def\note{{\bf {\bigskip}{\noindent}Note: }}
\def\title{\em}
\renewcommand{\xi}{x_i}
\newcommand{\TenR}{\otimes}
\newcommand{\ii}{\underline{\textbf{\textit{i}}}}
\newcommand{\jj}{\underline{\textbf{\textit{j}}}}
\newcommand{\Hom}{{\rm Hom}}
\newcommand{\HOM}{{\rm HOM}}
\renewcommand{\to}{\rightarrow}
\newcommand{\id}{{\rm id}}
\newcommand{\define}{\stackrel{\mbox{\scriptsize{def}}}{=}}
\newcommand{\Ztt}{\ensuremath{\mathbb{Z}\left[t,t^{-1}\right]}}
\newcommand{\mc}[1]{\mathcal{#1}}
\newcommand{\ig}[2]{\vcenter{\xy (0,0)*{\includegraphics[scale=#1]{#2}} \endxy}}
\newcommand{\igc}[2]{\begin{center} \includegraphics[scale=#1]{#2} \end{center}}
\newcommand{\auptob}[2]{\xy  (0,-5)*+{#1}="1"; (0,5)*+{#2}="2"; {\ar@{|->} "1";"2"};\endxy}
\newcommand{\superscript}[1]{\ensuremath{^{\textrm{#1}}}}
\renewcommand{\th}{\superscript{th}}
\newcommand{\st}{\superscript{st}}
\def\R{{\mathbb R}}
\def\Z{{\mathbb Z}}
\def\1{{\mathbbm 1}}
\begin{document}

\baselineskip 14pt
\maketitle
 
\vspace{0.1in} 

\begin{abstract} Using the diagrammatic calculus for Soergel bimodules developed by B. Elias and M. Khovanov, we show that Rouquier complexes are functorial
over braid cobordisms. We explicitly describe the chain maps which correspond to movie move generators. \end{abstract}

\vspace{0.1in}

\footnotetext[1]{The authors we partially supported by NSF grants DMS-0739392 and DMS-0706924.}

\tableofcontents

\pagebreak

\section{Introduction}
\label{sec-introduction}

For some time, the category of Soergel bimodules, here called $\mc{SC}$, has played a significant role in the study of representation theory, while more recently strong connections between $\mc{SC}$ and knot theory have come to light. Originally introduced by Soergel in \cite{Soe1}, $\mc{SC}$ is an equivalent
but more combinatorial description of a certain category of Harish-Chandra modules over a semisimple lie algebra $\mathfrak{g}$. The added
simplicity of this formulation comes from the fact that $\mc{SC}$ is just a full monoidal subcategory of graded $R$-bimodules, where $R$ is a
polynomial ring equipped with an action of the Weyl group of $\mathfrak{g}$. Among other things, Soergel gave an isomorphism between the
Grothendieck ring of $\mc{SC}$ and the Hecke algebra $\mc{H}$ associated to $\mathfrak{g}$, where the Kazhdan-Lusztig generators $b_i$ of $\mc{H}$
lift to bimodules $B_i$ which are easily described. The full subcategory generated monoidally by these bimodules $B_i$ is here called $\mc{SC}_1$,
and the category including all grading shifts and direct sums of objects in $\mc{SC}_1$ is called $\mc{SC}_2$. It then turns out that $\mc{SC}$ is
actually the idempotent closure of $\mc{SC}_2$, which reduces the study of $\mc{SC}$ to the study of these elementary bimodules $B_i$ and their tensors.
For more on Soergel bimodules and their applications to representation theory see \cite{Soe2, Soe3, Soe4}.

An important application of Soergel bimodules was discovered by Rouquier in \cite{Rou1}, where he observes that one can construct complexes in $\mc{SC}_2$
which satisfy the braid relations modulo homotopy. To the $i\th$ overcrossing (resp. undercrossing) in the braid group Rouquier associates a
complex, which has $R$ in homological degree 0 and $B_i$ in homological degree $-1$ (resp. $1$). Giving the homotopy equivalence classes of
invertible complexes in $\mc{SC}_2$ the obvious group structure under tensor product, this assignment extends to a homomorphism from the braid
group. Using this, one can define an action of the braid group on the homotopy category of $\mc{SC}_2$, where the endofunctor associated to a crossing
is precisely taking the tensor product with its associated complex.

Following his work with L. Rozansky on matrix factorizations and link homology in \cite{KR}, Khovanov produced an equivalent categorification \cite{K1} of the
HOMFLY-PT polynomial utilizing Rouquier's work. To a braid one associates its Rouquier complex, which naturally has two gradings: the homological grading, and
the internal grading of Soergel bimodules. Then, taking the Hochschild homology of each term in the complex, one gets a complex which is triply graded (the
third grading is the Hochschild homological grading). Khovanov showed that, up to degree shifts, this construction yields an equivalent triply-graded complex
to the one produced by the reduced version of the Khovanov-Rozansky HOMFLY-PT link homology for the closure of the braid (see \cite{K1} and \cite{KR} for more
details).

Many computations of HOMFLY-PT link homology were done by B. Webster \cite{W2}, and by J. Rasmussen in \cite{Ras1} and \cite{Ras2}. In the latter paper
\cite{Ras2}, Rasmussen showed that given a braid presentation of a link, for every $n \in \mathbb{N}$ there exists a spectral sequence with $E^1$-term its
HOMFLY-PT homology and the $E^{\infty}$-term its $sl(n)$ homology. This was a spectacular development in understanding the structural properties of these
theories, and has also proven very useful in computation (see for example \cite{MV}).

One key aspect of the original Khovanov-Rozansky theory is that it gives rise to a projective functor. The braid group can actually be realized as the
isomorphism classes of objects in the category of braid cobordisms. This category, while having a topological definition, is equivalent to a combinatorially
defined category, whose objects are braid diagrams, and whose morphisms are called \emph{movies} (see Carter-Saito, \cite{CS}). For instance, performing a
Reidemeister 3 move on a braid diagram would give an equivalent element of the braid group, but gives a distinct object in the braid cobordism category;
however, the R3 move itself is a movie which gives the isomorphism between those two objects. It was shown in \cite{KR} that for each movie between braids one
can associate a chain map between their triply-graded complexes. This assignment was known to be \emph{projectively functorial}, meaning that the relations
satisfied amongst movies in the braid cobordism category are also satisfied by their associated chain maps, up to multiplication by a scalar. Scalars take
their value in $\mathbb{Q}$, the ring over which Khovanov-Rozansky theory is defined. However, these chain maps are not explicitly described even in the
setting of Khovanov-Rozansky theory, and the maps they correspond to in the Soergel bimodule context are even more obscure. A more general discussion of
braid group actions, including this categorification via Rouquier complexes, and their extensions to projective actions on the category of braid cobordisms
can be found in \cite{KT}.

Recently, in \cite{EKh}, the first author in conjunction with Mikhail Khovanov gave a presentation of the category $\mc{SC}_1$ in terms of generators and
relations. Moreover, it was shown that the entire category can be drawn graphically, thanks to the biadjointness and cyclicity properties that the category
possesses. Each $B_i$ is assigned a color, and a tensor product is assigned a sequence of colors. Morphisms between tensor products can be drawn as certain
colored graphs in the plane, whose boundaries on bottom and top are the sequence of colors associated to the source and target. Composition and tensor product
of morphisms correspond to vertical and horizontal concatenation, respectively. Morphisms are invariant under isotopy of the graph embedding, and satisfy a
number of other relations, as described herein. In addition to providing a presentation, this graphical description is useful because one can use pictures to
encapsulate a large amount of information; complicated calculations involving compositions of morphisms can be visualized intuitively and written down
suffering only minor headaches.

Because of the simplicity of the diagrammatic calculus, we were able to calculate explicitly the chain maps which correspond to each generating
cobordism in the braid cobordism category, and check that these chain maps satisfy the same relations that braid cobordisms do. The general proofs
are straightforward and computationally explicit, performable by any reader with patience, time, and colored chalk. While we use some slightly more
sophisticated machinery to avoid certain incredibly lengthy computations, the machinery is completely unnecessary. This makes the results of
Rouquier and Khovanov that much more concrete, and implies the following new result.

\begin{theorem} \label{mainthm} There is a functor $F$ from the category of combinatorial braid cobordisms to the category of complexes in
$\mc{SC}_2$ up to homotopy, lifting Rouquier's construction (i.e. such that $F$ sends crossings to Rouquier complexes). \end{theorem}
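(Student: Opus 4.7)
The plan is to present the combinatorial braid cobordism category by generators and relations following Carter-Saito \cite{CS}. The generators of morphisms are the elementary movies: commutation moves $\sigma_i \sigma_j \leftrightarrow \sigma_j \sigma_i$ for $|i-j|\geq 2$, the braid relations $\sigma_i \sigma_{i+1} \sigma_i \leftrightarrow \sigma_{i+1} \sigma_i \sigma_{i+1}$, the Reidemeister~II cancellations $\sigma_i \sigma_i^{-1} \leftrightarrow 1$ and $\sigma_i^{-1} \sigma_i \leftrightarrow 1$, and their time reverses. To define $F$, it therefore suffices to specify it on these generators and to check that the movie-move relations are satisfied up to chain homotopy.

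On objects, $F$ is Rouquier's assignment: a braid word maps to the tensor product of the two-term Rouquier complexes associated to each crossing. For each generating movie I will write down an explicit chain map between the relevant Rouquier complexes in $\mc{SC}_2$, with matrix entries described as colored planar diagrams using the calculus of \cite{EKh}. Since Rouquier's theorem already guarantees that the two sides of each such movie have homotopy equivalent complexes, existence of the desired chain maps is not in question; the content of the construction is to produce specific, computable representatives. The diagrammatic calculus makes this feasible, and in several cases it essentially forces the answer --- for instance via the graded dimensions of the relevant $\Hom$-spaces, which leave only a one-parameter family of candidates once one requires a chain map.

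The main obstacle is verifying that $F$ respects the movie moves. Carter-Saito's list contains roughly fifteen families of relations, and for each one must compute both compositions of elementary chain maps and produce a chain homotopy between them. In principle each verification is a direct if voluminous calculation in the diagrammatic calculus, reducing to local relations of $\mc{SC}_1$. The difficulty is managed by (a) exploiting symmetries --- reflection, color swap, reversal of crossing signs --- which identify many movie moves, and (b) invoking general categorical facts about invertible objects and the one-dimensionality of certain graded $\Hom$-spaces to collapse most homotopy-equivalence checks to a comparison of scalars. Fixing these remaining scalar normalizations consistently is the delicate point, and it is precisely this step that promotes the naturally projective functoriality of the Khovanov-Rozansky version recalled in the introduction to the genuine (non-projective) functoriality asserted by Theorem~\ref{mainthm}.
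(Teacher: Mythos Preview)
Your proposal is essentially the paper's own approach: define $F$ on movie generators by explicit diagrammatic chain maps, then verify the Carter--Saito movie moves, exploiting symmetries and the one-dimensionality of $\Hom(P,Q)$ for braid-equivalent $P,Q$ (the Clark--Morrison--Walker homotopically-isolated-summand trick) to reduce most checks to scalar comparisons. One small omission: your list of generating movies covers only the type~I (Reidemeister-style) transformations and omits the type~II birth/death of a crossing; these are needed both as generators and for movie moves 11--14, though the chain maps involved are the simplest of the lot.
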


Soergel bimodules are generally defined over certain fields $\Bbbk$ in the literature, because one is usually interested in Soergel bimodules as a
categorification of the Hecke algebra, and in relating indecomposable bimodules to the Kazhdan-Lusztig canonical basis. However, we invite the reader to
notice that the diagrammatic construction in \cite{EKh} can be made over any ring, and in particular over $\Z$. In fact, all our proofs of functoriality still
work over $\Z$. We discuss this in detail in section \ref{subsec-integers}. In the subsequent paper, we plan to use the work done here to define HOMFLY-PT and
$sl(n)$-link homology theories over $\Z$, a construction which is long overdue. We also plan to investigate the Rasmussen spectral sequence in this context.

At the given moment there does not exist a diagrammatic calculus for the higher Hochschild homology of Soergel bimodules. Some insights have already been
obtained, although a full understanding had yet to emerge. We plan to develop the complete picture, which should hopefully give an explicit and easily
computable description of functoriality in the link homology theories discussed above.

The organization of this paper is as follows. In Section \ref{sec-constructions} we go over all the previous constructions that are relevant to
this paper. This includes the Hecke algebra, Soergel's categorification $\mc{SC}$, the graphical presentation of $\mc{SC}$, the combinatorial
braid cobordism category, and Rouquier's complexes which link $\mc{SC}$ to braids. In Section \ref{subsec-conventions} we describe the
conventions we will use in the remainder of the paper to draw Rouquier complexes for movies. In Section \ref{sec-defn} we define the functor from
the combinatorial braid cobordism category to the homotopy category of $\mc{SC}$, and in Section \ref{sec-moviemoves} we check the movie move
relations to verify that our functor is well-defined. These checks are presented in numerical order, not in logical order, but a discussion of
the logical dependency of the proofs, and of the simplifications that are used, can be found in Section \ref{subsec-simplify}. Section
\ref{sec-additional} contains some useful statements for the interested reader, but is not strictly necessary. Some additional light is shed on
the generators and relations of $\mc{SC}$ in Section \ref{subsec-brute}, where it is demonstrated how the relations arise naturally from movie
moves. In Section \ref{subsec-integers} we briefly describe how one might construct the theory over $\Z$, so that future
papers may use this result to define link homology theories over arbitrary rings.
\section{Constructions}
\label{sec-constructions}

%
\subsection{The Hecke Algebra}
\label{subsec-hecke}
%
The Hecke algebra $\mc{H}$ of type $A_\infty$ has a presentation as an algebra over $\Ztt$ with
generators $b_i$, $i \in \Z$ and the \emph{Hecke relations}
\begin{eqnarray}
b_i^2 & = & (t + t^{-1}) b_i  \label{eqn-bisq}\\
b_ib_j & = & b_jb_i \ \mathrm{for}\  |i-j|\ge 2 \label{eqn-bibj}\\
b_ib_{i+1}b_i + b_{i+1} & = & b_{i+1}b_ib_{i+1} + b_i \label{eqn-bibpbi}.
\end{eqnarray}
For any subset $I \subset \Z$, we can consider the subalgebra $\mc{H}(I) \subset \mc{H}$
generated by $b_i$, $i \in I$, which happens to have the same presentation as above.  Usually only
finite $I$ are considered.

We write the monomial $b_{i_1}b_{i_2}\cdots b_{i_d}$ as $b_{\ii}$ where $\ii=i_1\ldots i_d$ is a
finite sequence of indices; by abuse of notation, we sometimes refer to this monomial simply as
$\ii$. If $\ii$ is as above, we say the monomial has \emph{length} $d$. We call a monomial \emph{non-repeating} if $i_k
\ne i_l$ for $k \ne l$. The empty set is a sequence of length 0, and $b_{\emptyset}=1$.

Let $\omega$ be the $t$-antilinear anti-involution which fixes $b_i$,
i.e. $\omega(t^ab_{\ii})=t^{-a}b_{\sigma(\ii)}$ where $\sigma$ reverses the order of a sequence. Let
$\epsilon \colon \mc{H} \to \Ztt$ be the $\Ztt$-linear map which is uniquely specified by
$\epsilon(xy)=\epsilon(yx)$ for all $x,y \in \mc{H}$ and $\epsilon(b_{\ii})=t^d$, whenever $\ii$ is a
non-repeating sequence of length $d$.  Let $(,) \colon \mc{H} \times \mc{H} \to \Ztt$ be the map
which sends $(x,y) \mapsto \epsilon(\omega(x)y)$. Via the inclusion maps, these structures all
descend to each $\mc{H}(I)$ as well.

We say $i,j \in \Z$ are \emph{adjacent} if $|i-j|=1$, and are \emph{distant} if $|i-j| \ge 2$.

For more details on the Hecke algebra in this context, see \cite{EKh}.

%
\subsection{The Soergel Categorification}
\label{subsec-soergel}
%

In \cite{Soe1}, Soergel introduced a monoidal category categorifying the
Hecke algebra for a finite Weyl group $W$ of type $A$. We will denote this category by $\mc{SC}(I)$, or by $\mc{SC}$ when $I$ is irrelevant. Letting $V$ be the geometric representation of $W$ over a field $\Bbbk$ of characteristic $\ne 2$, and $R$ its coordinate ring, the category $\mc{SC}$ is given as a full additive monoidal subcategory of graded
$R$-bimodules (whose objects are now commonly referred to as Soergel bimodules). This category is not
abelian, for it lacks images, kernels, and the like, but it is idempotent closed. In fact, $\mc{SC}$ is given
as the idempotent closure of another full additive monoidal subcategory $\mc{SC}_1$, whose objects are
called Bott-Samuelson modules. The category $\mc{SC}_1$ is generated monoidally over $R$ by objects $B_i$,
$i \in I$, which satisfy

\begin{equation} B_i \TenR B_i \cong B_i\{1\} \oplus B_i\{-1\} \label{dc-ii} \end{equation}
\begin{equation} B_i \TenR B_j \cong B_j \TenR B_i \label{dc-ij} \textrm{ for distant } i, j \end{equation}
\begin{equation} B_i \TenR B_j \TenR B_i \oplus B_j \cong B_j \TenR B_i \TenR B_j \oplus
  B_i \label{dc-ipi} \textrm{ for adjacent } i, j. \end{equation}

The Grothendieck group of $\mc{SC}(I)$ is isomorphic to $\mc{H}(I)$, with the class of $B_i$ being
sent to $b_i$, and the class of $R\{1\}$ being sent to $t$.

One useful feature of this categorification is that it is easy to calculate the dimension of Hom spaces in each degree. Let $\HOM(M,N) \define
\bigoplus_{m \in \Z} \Hom(M,N\{m\})$ be the graded vector space (actually an $R$-bimodule) generated by homogeneous morphisms of all degrees. Let
$B_{\ii} \define B_{i_1} \TenR \cdots \TenR B_{i_d}$. Then $\HOM(B_{\ii},B_{\jj})$ is a free left $R$-module, and its graded rank over $R$ is
given by $(b_{\ii},b_{\jj})$.

For two subsets $I \subset I^\prime \subset \Z$, the categories $\mc{SC}(I)$ and $\mc{SC}(I^\prime)$
are embedded in bimodule categories over different rings $R(I)$ and $R(I^\prime)$, but there is nonetheless a faithful
inclusion of categories $\mc{SC}(I) \to \mc{SC}(I^\prime)$.  This functor is not full: the size of
$R$ itself will grow, and $\HOM(B_{\emptyset},B_{\emptyset})=R$. However, the graded rank over $R$
does not change, since the value of $\epsilon$ and hence $(,)$ does not change over various
inclusions.  Effectively, the only difference in Hom spaces under this inclusion functor is base
change on the left, from $R(I)$ to $R(I^\prime)$.

As a result of this, most calculations involving morphisms between Soergel bimodules will not depend on which $I$ we
work over.  When $I$ is infinite, the ring $R$ is no longer Noetherian, and we do not wish to deal
with such cases.  However, the categories $\mc{SC}(I)$ over arbitrary finite $I$ will all work
essentially the same way.  A slightly more rigorous graphical statement of this property is
forthcoming.  In particular, the calculations we do for the Braid group on $m$ strands will also
work for the braid group on $m+1$ strands, and so forth.

%
\subsection{Soergel Diagrammatics}
\label{subsec-diagrammatics}
%

In \cite{EKh}, the category $\mc{SC}_1$ was given a diagrammatic presentation by generators and
relations, allowing morphisms to be viewed as isotopy classes of certain graphs. We
review this presentation here, referring the reader to \cite{EKh} for more details.  We will first
deal with the case where $W=S_{n+1}$, or where $I=\{1,2,\ldots,n\}$, and then discuss what the inclusions
of categories from the previous section imply for the general setting.

\begin{remark} Technically, \cite{EKh} gave the presentation for a slightly different category, which we
temporarily call $\mc{SC}_1^\prime$. The category presented here is a quotient of $\mc{SC}_1^\prime$ by the
central morphism corresponding to $e_1$, the first symmetric polynomial. This is discussed briefly in
Section 4.5 of \cite{EKh}. Moreover, $\mc{SC}_1^\prime$ is also a faithful extension of $\mc{SC}_1$, so
that the main results of this paper apply to the extension as well. We use $\mc{SC}_1$ instead because it
is the ``minimal" category required for our results (no extensions are necessary), and because it
streamlines the presentation. We leave it as an exercise to see that the definition of $\mc{SC}_1$ below agrees with
the $e_1$ quotient of the category defined in \cite{EKh}.

The first subtlety to be addressed is that $\mc{SC}_1$ is only equivalent to the $e_1$ quotient of $\mc{SC}_1^\prime$ when one is working over a
base ring $\Bbbk$ where $n+1$ is invertible. Otherwise, the quotient of $\mc{SC}_1^\prime$ is still a non-trivial faithful extension.

For a discussion of the advantages to using $\mc{SC}_1^\prime$, see Section \ref{subsec-integers}. \end{remark}

An object in $\mc{SC}_1$ is given by a sequence of indices $\ii$, which is visualized as $d$ points
on the real line $\R$, labelled or ``colored'' by the indices in order from left to right. Sometimes
these objects are also called $B_{\ii}$.  Morphisms are given by pictures embedded in the strip $\R
\times [0,1]$ (modulo certain relations), constructed by gluing the following generators
horizontally and vertically:

\igc{1}{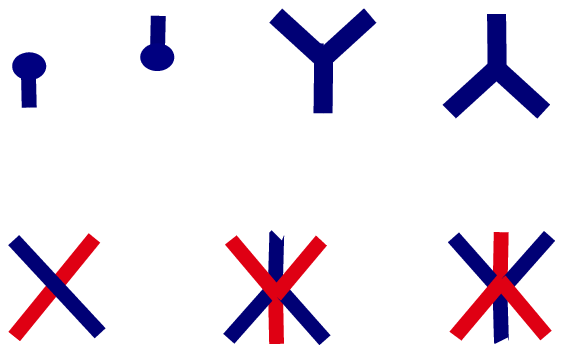}

For instance, if "blue" corresponds to the index $i$ and "red" to $j$, then the lower right generator is a morphism from $jij$ to $iji$. The generating
pictures above may exist in various colors, although there are some restrictions based on of adjacency conditions.

We can view a morphism as an embedding of a planar graph, satisfying the following properties:
\begin{enumerate}
\item Edges of the graph are colored by indices from $1$ to $n$.
\item Edges may run into the boundary $\R \times \{0,1\}$, yielding two sequences of colored points
  on $\R$, the top boundary $\ii$ and the bottom boundary $\jj$.  In this case, the graph is viewed
  as a morphism from $\jj$ to $\ii$.
\item Only four types of vertices exist in this graph: univalent vertices or ``dots'', trivalent
  vertices with all three adjoining edges of the same color, 4-valent vertices whose adjoining edges
  alternate in colors between $i$ and $j$ distant, and 6-valent vertices whose adjoining
  edges alternate between $i$ and $j$ adjacent.
\end{enumerate}

The degree of a graph is +1 for each dot and -1 for each trivalent vertex. $4$-valent and $6$-valent vertices are of degree $0$. The term \emph{graph} henceforth refers to such a graph embedding.

By convention, we color the edges with different colors, but do not specify which colors match up with which $i \in I$. This is legitimate, as
only the various adjacency relations between colors are relevant for any relations or calculations. We will specify adjacency for all pictures,
although one can generally deduce it from the fact that 6-valent vertices only join adjacent colors, and 4-valent vertices join only distant colors.

As usual in a diagrammatic category, composition of morphisms is given by vertical concatenation,
and the monoidal structure is given by horizontal concatenation.

In writing the relations, it will be useful to introduce a pictures for the ``cup'' and ``cap'':

\begin{equation} \ig{1}{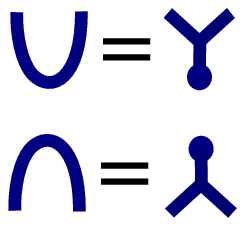} \end{equation}

We then allow $\Bbbk$-linear sums of graphs, and apply the relations below to obtain our category $\mc{SC}_1$. Some of these relations are
redundant. For a more detailed discussion of the remarks in the remainder of this section, see \cite{EKh}.

\begin{equation} \label{twistline} \ig{.9}{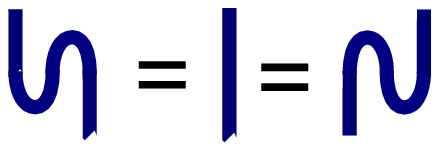}  \end{equation}
\begin{equation} \label{twistdot} \ig{.9}{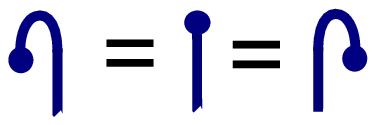}\end{equation}
\begin{equation} \label{twist3} \ig{.9}{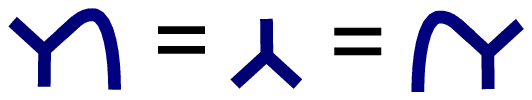}\end{equation}
\begin{equation} \label{twist4} \ig{.9}{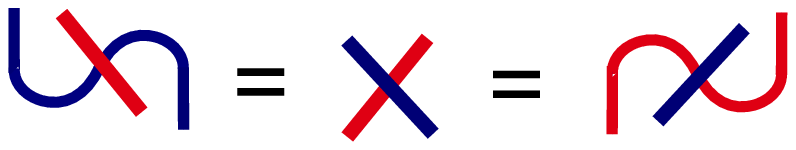} \end{equation}
\begin{equation} \label{twist6} \ig{.9}{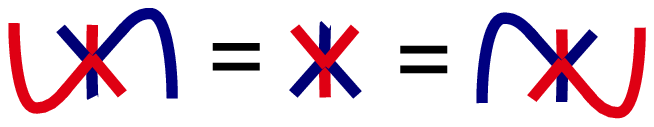} \end{equation}

\begin{remark} The relations (\ref{twistline}) through (\ref{twist6}) together imply that the morphism
specified by a particular graph embedding is independent of the isotopy class of the embedding. We could
have described the category more simply by defining a morphism to be an isotopy class of a certain kind of
planar graph. However, it is useful to understand that these ``isotopy relations'' exist, because they will
appear naturally in the study of movie moves (see Section \ref{subsec-brute}).

Other relations are written in a format which already assumes that isotopy invariance is given. Some of these relations contain horizontal lines, which
cannot be constructed using the generating pictures given; nonetheless, such a graph is isotopic to a number of different pictures which are indeed
constructible, and it is irrelevant which version you choose, so the relation is unambiguous. \end{remark}

\begin{equation} \label{assoc1} \ig{.9}{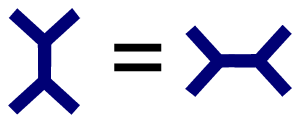} \end{equation}

\begin{remark} Relation (\ref{assoc1}) effectively states that a certain morphism is invariant under 90 degree rotation. To simplify drawings
later on, we often draw this morphism as follows:
\igc{.4}{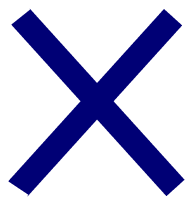}
 Note that morphisms will still be isotopy invariant with this convention. \end{remark}

Here are the remainder of the one color relations.

\begin{equation} \label{unit} \ig{.9}{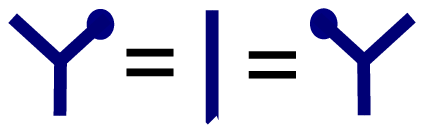} \end{equation}
\begin{equation} \label{needle} \ig{1.4}{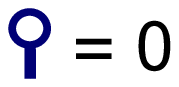} \end{equation}
\begin{equation} \label{dotslidesame} \ig{1.5}{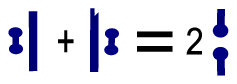} \end{equation}

In the following relations, the two colors are distant.

\begin{equation} \label{R2} \ig{.6}{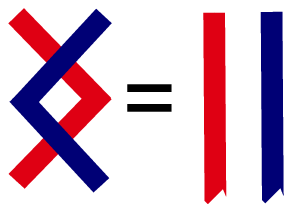} \end{equation}
\begin{equation} \label{distslidedot} \ig{1}{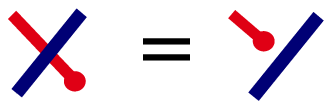} \end{equation}
\begin{equation} \label{distslide3} \ig{.7}{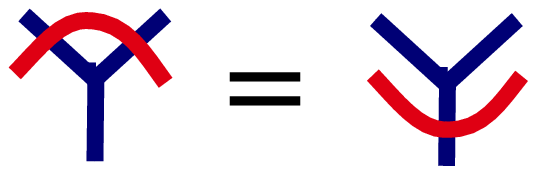} \end{equation}
\begin{equation} \label{dotslidefar} \ig{.6}{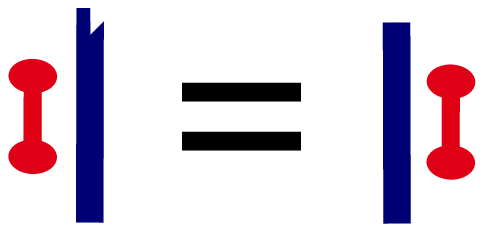} \end{equation}

In this relation, two colors are adjacent, and both distant to the third color.

\begin{equation} \label{distslide6} \ig{.7}{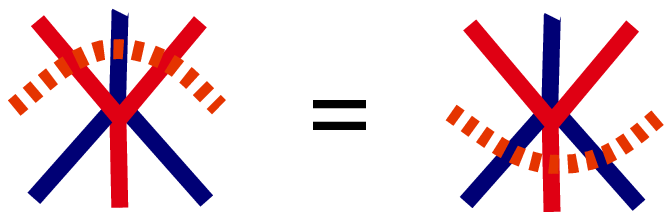} \end{equation}

In this relation, all three colors are mutually distant.

\begin{equation} \label{distslide4} \ig{1}{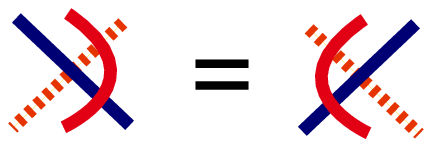} \end{equation}

\begin{remark} Relations (\ref{R2}) thru (\ref{distslide4}) indicate that any part of the graph colored $i$ and any part of the graph colored $j$
``do not interact'' for $i$ and $j$ distant. That is, one may visualize sliding the $j$-colored part past the $i$-colored part, and it will not
change the morphism. We call this the \emph{distant sliding property}. \end{remark}

In the following relations, the two colors are adjacent.

\begin{equation} \label{dot6} \ig{.7}{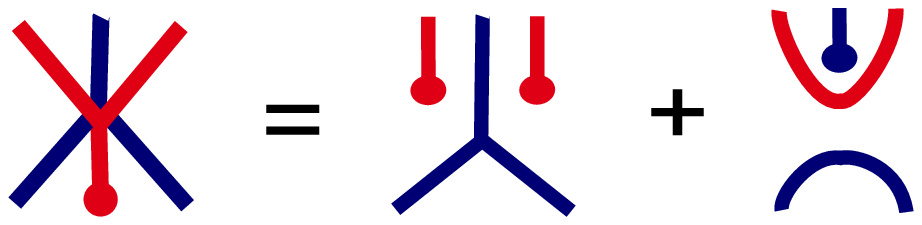} \end{equation}
\begin{equation} \label{ipidecomp} \ig{.7}{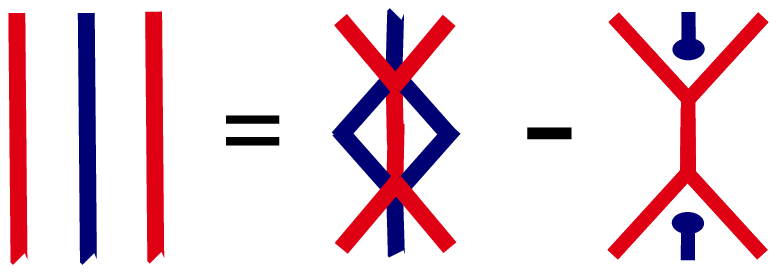} \end{equation}
\begin{equation} \label{assoc2} \ig{.7}{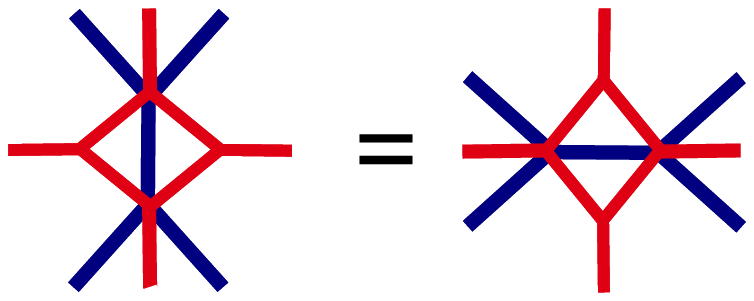} \end{equation}
\begin{equation} \label{dotslidenear} \ig{.7}{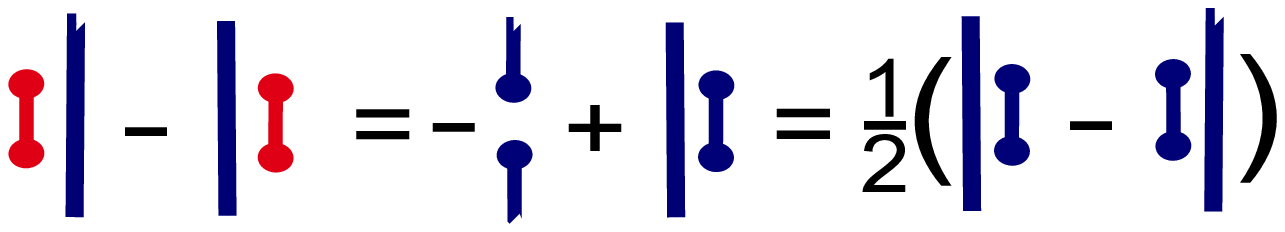} \end{equation}

The last equality in (\ref{dotslidenear}) is implied by (\ref{dotslidesame}), so it is not necessary to include as a relation. In this final
relation, the colors have the same adjacency as $\{1,2,3\}$.

\begin{equation} \label{assoc3} \ig{.7}{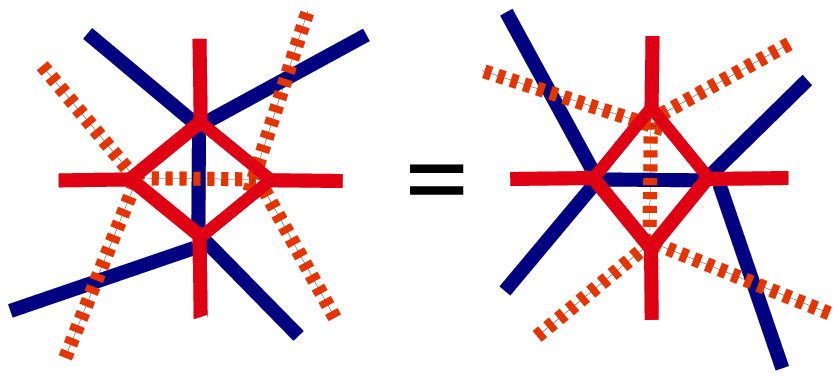} \end{equation}

\begin{remark} Because of isotopy invariance, the object $B_i$ in $\mc{SC}_1$ is self-biadjoint. In
  particular, instead of viewing the graph in $\R \times [0,1]$ as a morphism from $\ii$ to $\jj$,
  we could twist it around and view it in the lower half plane (with no bottom boundary) as a morphism from
  $\emptyset$ to $\ii\sigma(\jj)$. Thus, we need only investigate morphisms from
  $\emptyset$ to $\ii$, to determine all Hom spaces.
\end{remark}

\begin{remark} There is a functor from this category into the category of $R$-bimodules, sending a
  line colored $i$ to $B_i$ and each generator to an appropriate bimodule map. The functor gives an
  equivalence of categories between this graphically defined category and the subcategory
  $\mc{SC}_1$ of $R$-bimodules mentioned in the previous section, so the use of the same name is legitimate.
\end{remark}

We refer to any connected component of a graph which is a dot connected directly to the boundary as
a \emph{boundary dot}, and to any component equal to two dots connected by an edge as a \emph{double
  dot}.

\begin{remark} Relations (\ref{dotslidesame}), (\ref{dotslidefar}), and (\ref{dotslidenear}) are collectively called \emph{dot slides}. They
indicate how one might attempt to move a double dot from one region of the graph to another. \end{remark}

The following theorem and corollary are the most important results from \cite{EKh}, and the crucial
fact which allows all other proofs to work.

\begin{theorem} \label{colorreduction} Consider a morphism $\phi \colon \ii \to \emptyset$, and suppose that the index $i$
  appears in $\ii$ zero times (respectively, once). Then $\phi$ can be rewritten as a linear
  combination of graphs, for which each graph has the following property: the only edges of the
  graph colored $i$ are included in double dots (respectively, as well as a single boundary dot
  connecting to $\ii$), and moreover, all these double dots are in the leftmost region of the
  graph. This result may be obtained simultaneously for multiple indices $i$. We could also have chosen the rightmost region for the slide.
\end{theorem}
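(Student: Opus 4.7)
The plan is to induct on a suitable complexity measure of the $i$-colored portion of the graph and to reach a normal form in three stages: first eliminate the $6$-valent vertices that involve color $i$, then simplify the resulting essentially single-color $i$-subgraph, and finally migrate the result to the leftmost region of the diagram.

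For the elimination stage, I would locate each $6$-valent vertex incident to $i$-colored edges (which necessarily pairs $i$ with an adjacent color $j$) and apply local rewrites via the two-color relations. Relation (\ref{dot6}) converts a $6$-valent vertex adjacent to an $i$-colored dot into purely $j$-colored trivalent structure, strictly decreasing the number of $6$-valent vertices touching $i$, and relation (\ref{ipidecomp}) decomposes stacked $iji$--$jij$ configurations into summands of lower complexity. Combined with isotopy invariance and the associativity-type relations (\ref{assoc2}) and (\ref{assoc3}), repeated application drives the $i$-subgraph toward one with no $6$-valent vertices at all. Once that is achieved, the $i$-subgraph interacts with other colors only through $4$-valent vertices (distant colors), past which $i$-edges slide freely by the distant sliding property; I may therefore treat the $i$-subgraph in isolation. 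It is then a closed one-color diagram when $i$ does not appear in $\ii$, or a one-color diagram with a single boundary endpoint when $i$ appears once. The needle relation (\ref{needle}) annihilates any component containing a needle, and the remaining one-color generators (\ref{unit}), (\ref{assoc1}), (\ref{dotslidesame}) reduce what remains to a polynomial in $i$-colored double dots, times a single boundary dot in the latter case. A final round of dot slides (\ref{dotslidesame}), (\ref{dotslidefar}), (\ref{dotslidenear}) carries every resulting double dot past all other-colored graph structure into the leftmost region, and the analogous slides carry the unique boundary dot (when present) to the leftmost boundary $i$-point.

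The main obstacle is the first stage: showing that iterated application of the local rewrites actually terminates with no $6$-valent vertices touching $i$. The delicate point is finding a strict monovariant, since a naive count can fail to decrease when a relation produces several summands. I expect a lexicographic measure of the form (number of $6$-valent vertices incident to $i$, total number of $i$-colored vertices of valence $\geq 2$) to work, with each decomposition producing summands of strictly smaller measure. The hypothesis that $i$ appears at most once on $\ii$ is essential here, because it rules out the configurations in which an $i$-edge threading through several $6$-valent vertices would obstruct the reduction by forcing new ones to appear. The simultaneous reduction for multiple indices is automatic since each single-color procedure is local in $i$ and leaves graph structure of other colors untouched, and the symmetric statement for the rightmost region follows by reflecting all arguments.
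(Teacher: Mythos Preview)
The paper does not actually prove this theorem: it is quoted as one of ``the most important results from \cite{EKh}'', and the only argument offered in the present paper is the one-sentence sketch following the corollary, namely that one ``uses the relations to reduce a single color at a time within a graph (while doing arbitrary things to the other colors)'', after which that color no longer interacts with the rest. Your three-stage plan (kill $6$-valent vertices touching $i$, simplify the now one-color $i$-part, slide the resulting double dots left) is exactly an elaboration of that sketch, so at the level of strategy you and the paper agree.

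Where your proposal is genuinely incomplete is the place you yourself flag: the termination of Stage~1. The lexicographic measure you suggest does not obviously work. Relation (\ref{ipidecomp}) does not uniformly lower the number of $6$-valent vertices touching $i$ --- one of its summands \emph{is} a $6$-valent vertex --- and the other summand trades a $6$-valent vertex for trivalent structure in both colors, so the second coordinate of your measure can go up. Relation (\ref{dot6}) only fires when an $i$-dot is already adjacent to a $6$-valent vertex, and you give no mechanism for producing such a dot from a generic configuration of $6$-valent vertices linked by $i$-edges. The associativity relations (\ref{assoc2}), (\ref{assoc3}) rearrange rather than simplify, so they do not drive your induction either. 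The actual argument in \cite{EKh} is more delicate than a single lexicographic count; it proceeds by a careful case analysis on the connectivity of the $i$-subgraph, exploiting that with at most one boundary endpoint the $i$-subgraph must contain a univalent vertex (a dot) somewhere, which can then be pushed through the graph using (\ref{unit}), (\ref{dot6}) and the dot slides to successively peel off $6$-valent vertices. Your claim that the boundary hypothesis ``rules out configurations in which an $i$-edge threading through several $6$-valent vertices would obstruct the reduction'' is not correct as stated: such threading can and does occur; what the hypothesis guarantees is the existence of a dot to start the peeling process.

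Stages~2 and~3 of your plan are fine once Stage~1 is in hand, and your remarks on simultaneous reduction and left/right symmetry are correct.
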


\begin{cor} The space $\HOM_{\mc{SC}_1}(\emptyset,\emptyset)$ is the free commutative polynomial
  ring generated by $f_i$, the double dot colored $i$, for various $i \in I$. This is a graded ring, with the degree of $f_i$ is 2. 
\end{cor}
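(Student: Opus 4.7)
The plan is to derive the corollary from Theorem \ref{colorreduction} together with the equivalence of the diagrammatic category with the bimodule category $\mc{SC}_1$ from Section \ref{subsec-soergel}. There is a natural candidate ring homomorphism from the polynomial ring $\Bbbk[f_i : i \in I]$ to $\HOM_{\mc{SC}_1}(\emptyset,\emptyset)$ sending each generator to the corresponding double dot, and the work amounts to verifying that this map is well-defined, surjective, and injective.

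Surjectivity comes directly from iterating Theorem \ref{colorreduction}. For any $\phi \colon \emptyset \to \emptyset$ and any fixed $i \in I$, the index $i$ appears zero times in the empty boundary sequence, so the theorem rewrites $\phi$ as a $\Bbbk$-linear combination of graphs in which all $i$-colored edges lie in double dots pushed into the leftmost region. Since $I$ is finite and the theorem applies to multiple indices simultaneously, one obtains $\phi$ as a linear combination of graphs whose every edge is part of a double dot, and any such graph is a disjoint union of double dots representing a monomial in the $f_i$. Commutativity of the $f_i$ in $\HOM(\emptyset,\emptyset)$ is visual: two disjoint closed components in the strip $\R \times [0,1]$ may be exchanged by isotopy of the embedding, justified by relations (\ref{twistline})--(\ref{twist6}). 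So the candidate map is a well-defined surjective graded ring homomorphism; the grading bookkeeping is immediate since each dot contributes $+1$.

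For injectivity, invoke the faithful functor from $\mc{SC}_1$ to the category of graded $R$-bimodules noted in the remark after relation (\ref{assoc3}). It induces an injection $\HOM_{\mc{SC}_1}(\emptyset,\emptyset) \hookrightarrow \Hom_{R,R}(R,R) = R$, where the latter identification uses that $R$ is commutative. Under this injection the double dot $f_i$ maps to a specific nonzero degree-$2$ element of $R$; in the type $A_n$ setting with the $e_1$ quotient described earlier, $R$ is itself a polynomial ring in $n$ variables, and the images of the $f_i$ form an algebraically independent generating set, which delivers the desired injectivity.

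The main obstacle is precisely this last step. Surjectivity and commutativity follow cleanly from the color reduction theorem together with isotopy invariance, but freeness of the polynomial ring requires the external input that the images of the double dots generate $R$ as a free polynomial ring, which seems difficult to extract from the diagrammatic relations alone without appealing to the bimodule model.
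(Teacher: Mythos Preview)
Your argument is correct and matches the paper's intended approach: the corollary is stated without proof in the paper, but the surrounding remarks make clear that surjectivity comes from Theorem~\ref{colorreduction} applied to every color, while injectivity (the ``freeness'' part) comes from the equivalence with the $R$-bimodule category, under which $\HOM(\emptyset,\emptyset)$ is identified with $R$ and the double dots with a free generating set. Your honest flag that freeness requires the bimodule model rather than the diagrammatics alone is exactly right and is consistent with how the paper treats it.
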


\begin{remark} Another corollary of the more general results in \cite{EKh} is that, when a color only appears twice in the boundary one can
(under certain conditions on other colors present) reduce the graph to a form where that color only appears in a line connecting the two boundary
appearances (and double dots as usual). In particular, if no color appears more than twice on the boundary, then under certain conditions one can
reduce all graphs to a form that has no trivalent vertices, and hence all morphisms have nonnegative degree. We will use this
fact to help check movie moves 8 and 9, in whose contexts the appropriate conditions do hold. \end{remark}

The proof of this theorem involves using the relations to reduce a single color at a time within a graph (while doing arbitrary things to the other
colors). Once a color is reduced to the above form, the remainder of the graph no longer interacts with that color. Then we repeat
the argument with another color on the rest of the graph, and so on and so forth.

\begin{remark} There is a natural identification of the polynomial ring of double dots and the coordinate ring $R$ of the geometric
representation. Because of this, a combination of double dots is occasionally referred to as a polynomial. Placing double dots in the lefthand or
righthand region of a diagram will correspond to the left and right action of $R$ on Hom spaces. \end{remark}

\begin{remark} Now we are in a position to see how the inclusion $\mc{SC}_1(I) \subset \mc{SC}_1(I^\prime)$ behaves. Let $\ii$ and $\jj$ be objects in
$\mc{SC}_1(I)$, and $k$ an index in $I^\prime \setminus I$. Applying Theorem \ref{colorreduction} to the color $k$, we can assume that in
$\mc{SC}_1(I^\prime)$ all morphisms from $\ii$ to $\jj$ will be (linear combinations of) graphs where $k$ only appears in double dots on the left. Doing this
to each color in $I^\prime \setminus I$, we will have a collection of double dots next to a morphism which only uses colors in $I$. Therefore the map
$\HOM_{\mc{SC}(I)}(\ii,\jj) \otimes \Bbbk[f_k,\ k \in I^\prime \setminus I ] \to \HOM_{\mc{SC}(I^\prime)}(\ii,\jj)$ is surjective. In fact, it is an
isomorphism. We say that the inclusion functor is \emph{fully faithful up to base change}. Of course, this result does not make it any easier to take a graph, which may have an arbitrarily complicated $k$-colored part, and reduce it to the simple form where $k$ only appears in double dots on the left.

If we wished to define $\mc{SC}_1(I)$ for some $I \subset \{1,\ldots,n\}$, the correct definition would be to consider graphs which
are only colored by indices in $I$. With this definition, inclusion functors are still fully faithful up to base change. \end{remark}

Now we see where the isomorphisms (\ref{dc-ii}) through (\ref{dc-ipi}) come from. To begin, we have the following implication of
(\ref{dotslidesame}):

\begin{equation} \label{iidecomp} \ig{1.7}{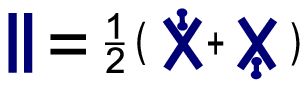} \end{equation}

We let $\mc{SC}_2$ be the category formally containing all direct sums and grading shifts of objects in $\mc{SC}_1$, but whose morphisms are
forced to be degree 0. Then (\ref{iidecomp}) expresses the direct sum decomposition \begin{equation} B_i \TenR B_i = B_i\{1\} \oplus B_i\{-1\}
\nonumber \end{equation} since it decomposes the identity $\id_{ii}$ as a sum of two orthogonal idempotents, each of which is the composition of a
projection and an inclusion map of the appropriate degree. If one does not wish to use non-integral coefficients, and an adjacent color is
present, then the following implication of (\ref{dotslidenear}) can be used instead; this is again a decomposition of $\id_{ii}$ into orthogonal
idempotents.

\begin{equation} \label{iidecomp2} \ig{.8}{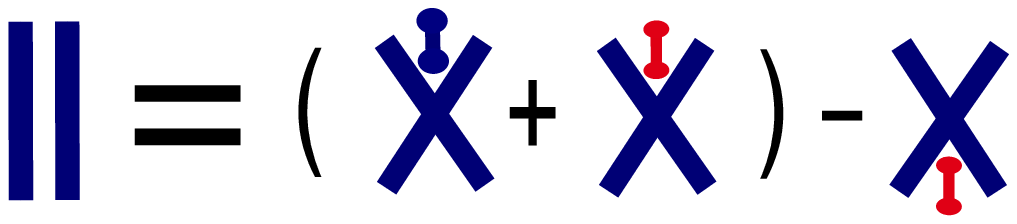} \end{equation}

Relation (\ref{R2}) expresses the
isomorphism \begin{equation} B_i \TenR B_j = B_j \TenR B_i \nonumber \end{equation} for $i$ and $j$ distant.

The category $\mc{SC}$ is the Karoubi envelope, or idempotent completion, of the category
$\mc{SC}_2$. Recall that the Karoubi envelope of a category $\mc{C}$ has as objects pairs $(B,e)$
where $B$ is an object in $\mc{C}$ and $e$ an idempotent endomorphism of $B$.  This object acts as
though it were the ``image'' of this projection $e$, and in an additive category behaves like a
direct summand. For more information on Karoubi envelopes, see Wikipedia.

The two color variants of relation (\ref{ipidecomp}) together express the direct sum
decompositions 
\begin{eqnarray} B_i \TenR B_{i+1} \TenR B_i = C_i \oplus B_i \\ B_{i+1} \TenR B_i \TenR
B_{i+1} = C_i \oplus B_{i+1}. \end{eqnarray}
 Again, the identity $\id_{i(i+1)i}$ is decomposed into
orthogonal idempotents, where the first idempotent corresponds to a new object $C_i$ in the idempotent
completion, appearing as a summand in both $i(i+1)i$ and $(i+1)i(i+1)$. Technically, we get two new
objects, corresponding to the idempotent in $B_{i(i+1)i}$ and the idempotent in $B_{(i+1)i(i+1)}$, but
these two objects are isomorphic, so by abuse of notation we call them both $C_i$.

We will primarily work within the category $\mc{SC}_2$. However, since this includes fully
faithfully into $\mc{SC}$, all calculations work there as well.

%
\subsection{Braids and Movies}
\label{subsec-braids}
%

In this paper we always use the combinatorial braid cobordism category as a replacement for the topological braid cobordism category, since they are
equivalent but the former is more convenient for our purposes. See Carter and Saito \cite{CS} for more details.

The category of $(n+1)$-stranded braid cobordisms can be defined as follows. The objects are arbitrary sequences of braid group generators $O_i$,
$1\leq i \leq n$, and their inverses $U_i = O_i^{-1}$. These sequences can be drawn using braid diagrams on the plane, where $O_i$ is an
overcrossing (the $i+1\st$ strand crosses over the $i\th$ strand) and $U_i$ is an undercrossing. Objects have a monoidal structure given by
concatenation of sequences. A \emph{movie} is a finite sequence of transformations of two types:\\

I. Reidemeister type moves, such as

$$\tau_1  O_i U_i \tau_2 \leftrightarrow \tau_1 \tau_2,$$

$$\tau_1 O_i O_j \tau_2       \leftrightarrow \tau_1  O_j O_i \tau_2 \textrm{ for distant } i, j $$

$$\tau_1 O_i O_{i+1} O_i \tau_2       \leftrightarrow \tau_1  O_{i+1} O_i O_{i+1} \tau_2. $$

where $\tau_1$ and $\tau_2$ are arbitrary braid words.\\ 

II. Addition or removal of a single $O_i$ or $U_i$ from a braid word

$$\tau_1 \tau_2 \leftrightarrow \tau_1 O_i^{\pm 1} \tau_2.$$

These transformations are known as \emph{movie generators}. Morphisms in this category will consist of movies modulo \emph{locality moves}, which
ensure that the category is a monoidal category, and certain relations known as \emph{movie moves} (it is common also to refer to locality moves as
movie moves). The movie moves can be found in figures \ref{MMa} and \ref{MMb}. Movie moves $1-10$ are composed of type I transformations and $11-14$
each contains a unique type II move. We denote the location of the addition or removal of a crossing in these last $4$ movies by little black
triangles. There are many variants of each of these movies: one can change the relative height of strands, can reflect the movie horizontally or
vertically, or can run the movie in reverse. We refer the reader to Carter and Saito \cite{CS}, section $3$.

\begin{figure} [!htbp]
\centerline{
\includegraphics[scale=.8]{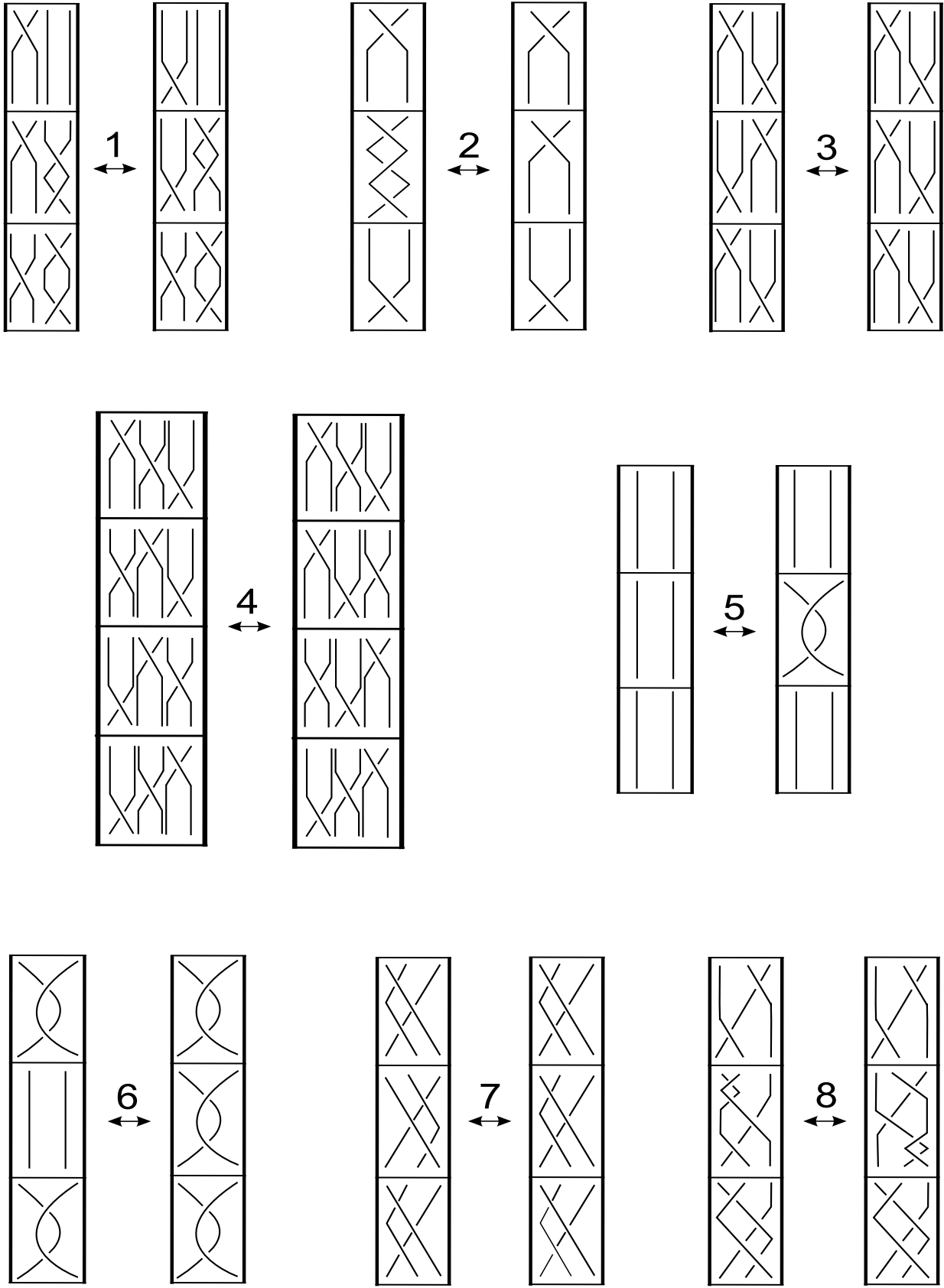}}
\caption{Braid movie moves $1-8$} \label{MMa}
\end{figure}\

\begin{figure} [!htbp]
\centerline{
\includegraphics[scale=.8]{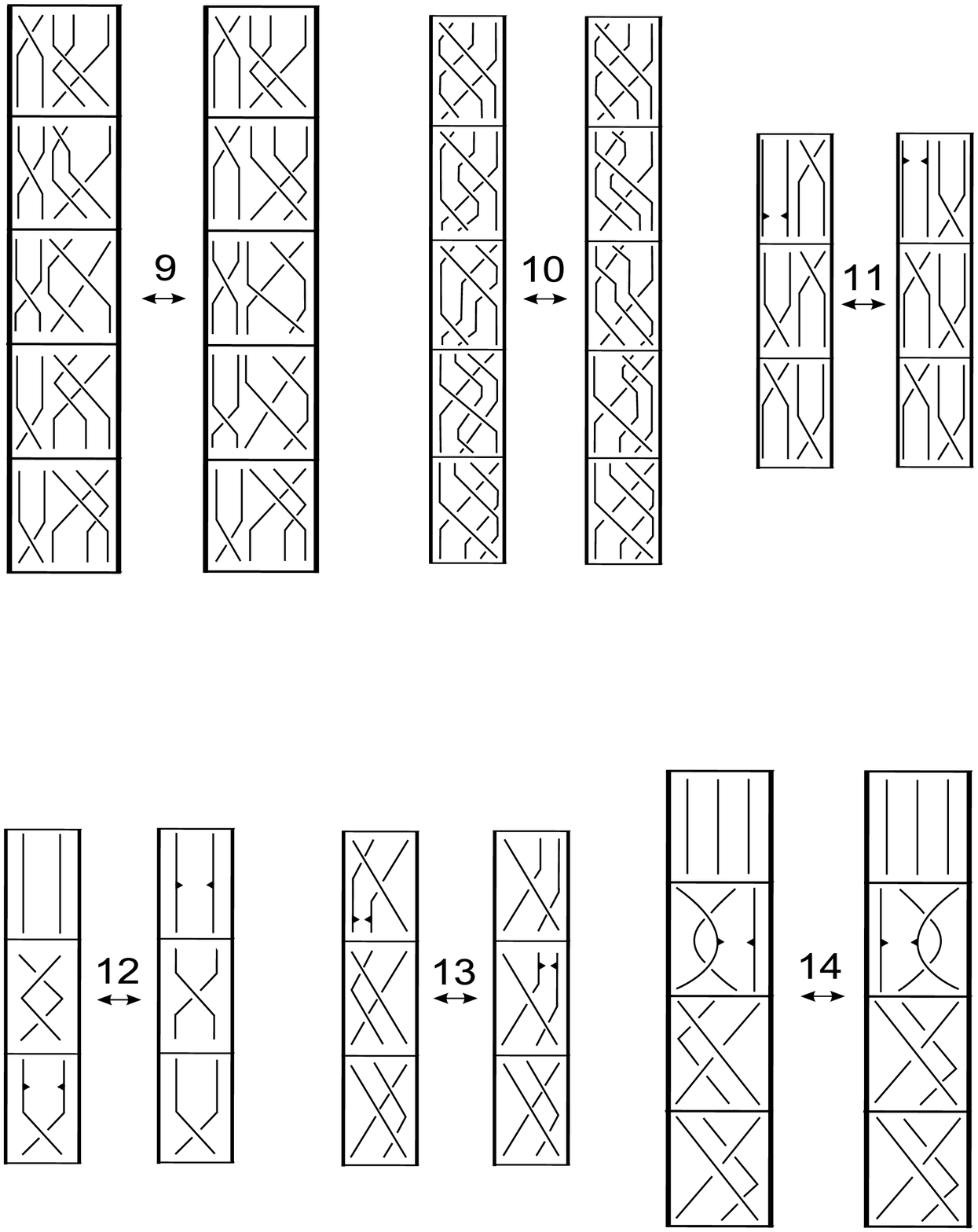}}
\caption{Braid movie moves $9-14$} \label{MMb}
\end{figure}\

Recall that the combinatorial cobordism category is monoidal. Locality moves merely state that if two transformations are performed on a diagram in locations
that do not interact (they do not share any of the same crossings) then one may change the order in which the transformations are performed. Any
potential functor from the combinatorial cobordism category to a monoidal category $\mc{C}$ which preserves the monoidal structure will
automatically satisfy the locality moves. Because of this, we need not mention the locality moves again.

\begin{defn} Given a braid diagram $P$ (or an object in the cobordism category), the diagram $\overline{P}$ is given by reversing the sequence
defining $P$, and replacing all overcrossings with undercrossings and vice versa. \end{defn}

Note that $\overline{P}$ is the inverse of $P$ in the group generated freely by crossings, and hence in the braid group as well.

Again, we refer the reader to \cite{CS} for more details on the combinatorial braid cobordism category.

\pagebreak

%
\subsection{Rouquier Complexes}
\label{subsec-rouquier}
%

Rouquier defined a braid group action on the homotopy category of complexes in $\mc{SC}_2$ (see
\cite{Rou1}). To the $i\th$ overcrossing, he associated a complex $B_i\{1\} \longrightarrow
B_{\emptyset}$, and to the undercrossing, $B_{\emptyset} \longrightarrow B_i\{-1\}$. In each case,
$B_{\emptyset}$ is in homological degree 0.  Drawn graphically, these complexes look like:

\begin{figure} [!htbp]
\centerline{
\includegraphics[scale=.6]{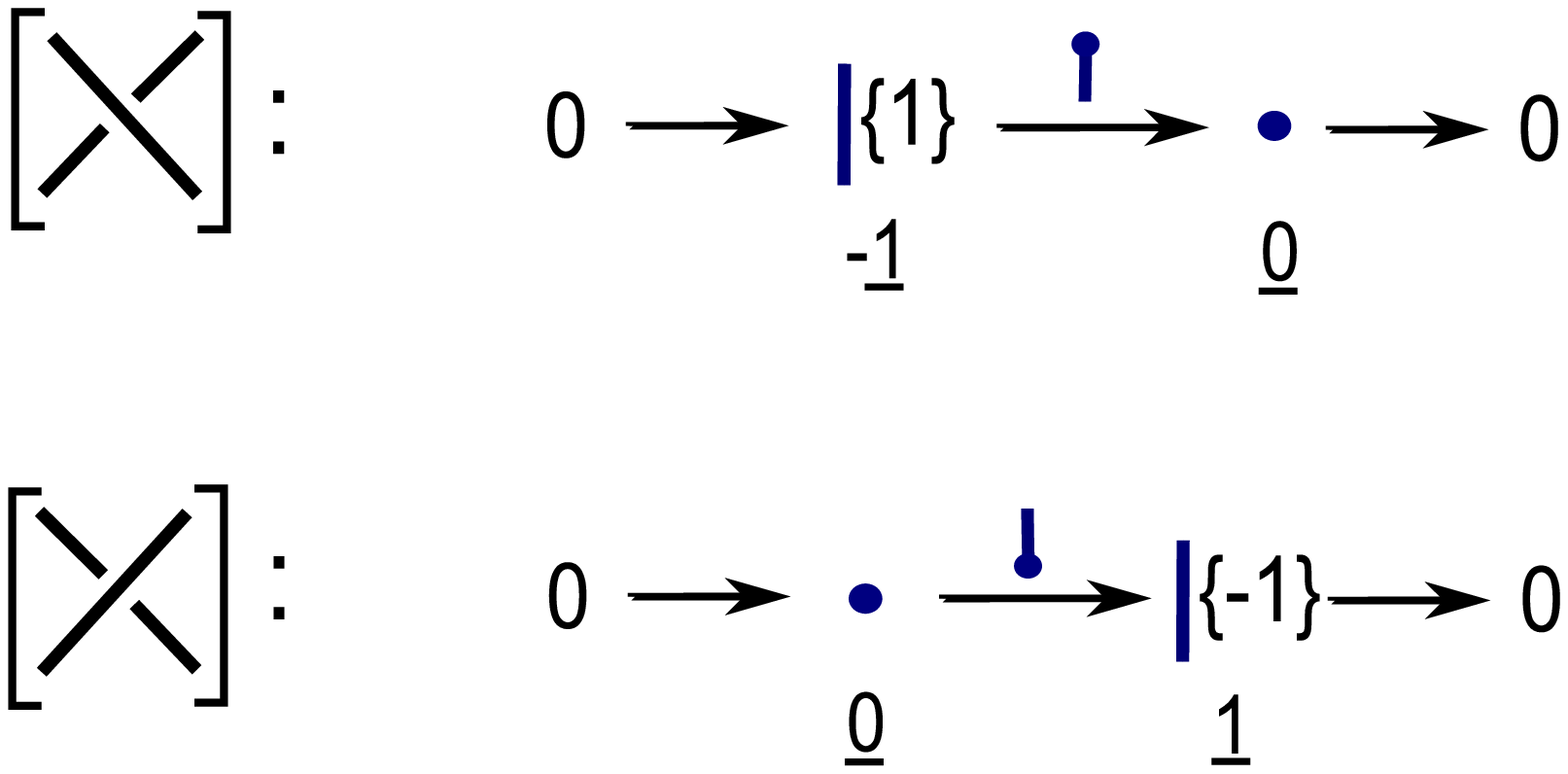}}
\caption{Rouquier complex for right and left crossings} \label{crossingsdef}
\end{figure}\

We are using a (blue) dot here as a place holder for empty space.

To a braid one associates the tensor product of the complexes for each crossing.  He showed in \cite{Rou1}
that the braid relations hold amongst these complexes.

In \cite{K1}, Khovanov showed that taking Hochschild cohomology of these complexes yields
an invariant of the link which closes off the braid in question, and that this link homology theory
is in fact identical to one already constructed by Khovanov and Rozansky in \cite{KR}.  It was
shown in \cite{KT} that Rouquier's association of complexes to a braid is actually \emph{projectively
  functorial}.  In other words, to each movie between braids, there is a map of complexes, and these
maps satisfy the movie move relations (modulo homotopy) up to a potential sign. This was not done by
explicitly constructing chain maps, but instead used the formal consequences of the
previously-defined link homology theory. It was known that in many cases the composed map would be
an isomorphism, and that this categorification could be done over $\Z$ (see \cite{KT}), where the only
isomorphisms are $\pm 1$, hence the proof of projective functoriality.

The discussion of the previous sections shows that it is irrelevant which braid group we work in,
because adding extra strands just corresponds to an inclusion functor which is ``fully faithful
after base change''.  In particular, when computing the space of chain maps modulo homotopy between
two complexes, we need not worry about the number of strands available, except to keep track of our
base ring.  Hence calculations are effectively local.

%
\subsection{Conventions}
\label{subsec-conventions}
%

These are the conventions we use to draw Rouquier complexes henceforth.

We use a colored circle to indicate the empty graph, but maintain the color for reasons of sanity.
It is immediately clear that in the complex associated to a tensor product of $d$ Rouquier complexes, each
summand will be a sequence of $k$ lines where $0 \leq k \leq d$ (interspersed with colored
circles, but these represent the empty graph so could be ignored). Each differential from one
summand to another will be a ``dot'' map, with an appropriate sign.

\begin{enumerate}

 \item The dot would be a map of degree 1 if $B_i$ had not been shifted accordingly. In $\mc{SC}_2$, all maps must be homogeneous, so we could have deduced
the degree shift in $B_i$ from the degree of the differential. Because of this, it is not useful to keep track of various degree shifts of objects in a
complex. We will draw all the objects without degree shifts, and all differentials will therefore be maps of graded degree 1 (as well as homological degree
1). It follows from this that homotopies will have degree -1, in order to be degree 0 when the shifts are put back in. One could put in the degree shifts
later, noting that $B_{\emptyset}$ always occurs as a summand in a tensor product exactly once, with degree shift 0.

\item Similarly, one need not keep track of the homological dimension. $B_{\emptyset}$ will always
  occur in homological dimension $0$.

\item We will use blue for the index associated to the leftmost crossing in the braid, then red and
  dotted orange for other crossings, from left to right. The adjacency of these various colors is
  determined from the braid.

\item We read tensor products in a braid diagram from bottom to top.  That is, in the following
  diagram, we take the complex for the blue crossing, and tensor by the complex for the red
  crossing. Then we translate this into pictures by saying that tensors go from left to right. In
  other words, in the complex associated to this braid, blue always appears to the left of red.

\vspace{3mm}

\begin{figure} [!htbp]
\centerline{
\includegraphics[scale=1.2]{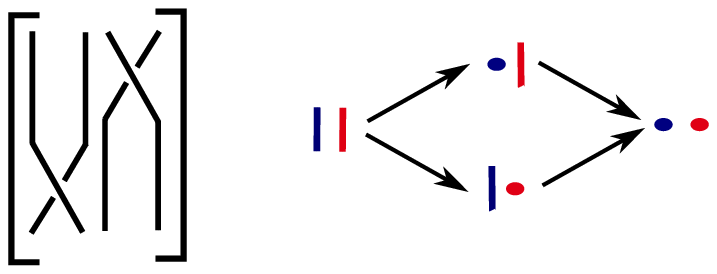}}
\label{example1}
\end{figure}\

\vspace{-5mm}

\item One can deduce the sign of a differential between two summands using the Liebnitz rule,
  $d(ab)=d(a)b + (-1)^{|a|}ad(b)$.  In particular, since a line always occurs in the basic complex
  in homological dimension $\pm 1$, the sign on a particular differential is exactly given by the
  parity of lines appearing to the left of the map.  For example,

\vspace{3mm}

\begin{figure} [!htbp]
\centerline{
\includegraphics[scale=.9]{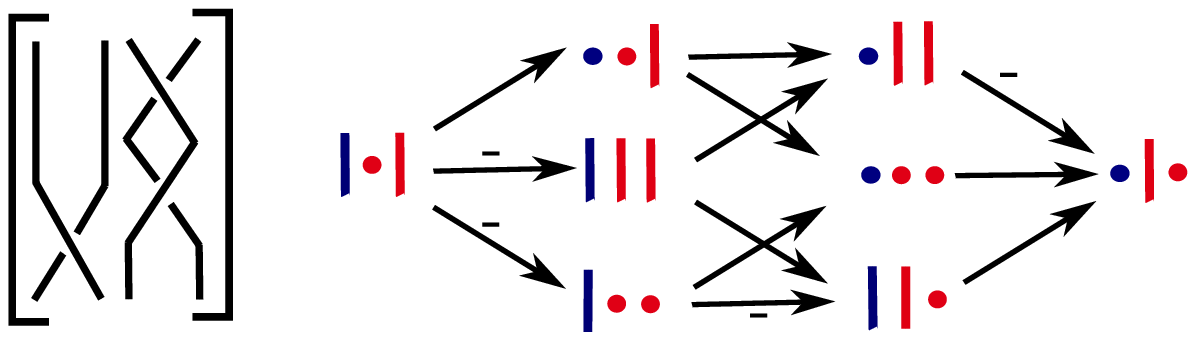}}
\label{example2}
\end{figure}\

\vspace{-5mm}

\item When putting an order on the summands in the tensored complex, we use the following
  standardized order.  Draw the picture for the object of smallest homological degree, which we draw
  with lines and circles.  In the next homological degree, the first summand has the first color
  switched (from line to circle, or circle to line), the second has the second color switched, and
  so forth.  In the next homological degree, two colors will be switched, and we use the
  lexicographic order: 1st and 2nd, then 1st and 3rd, then 1st and 4th... then 2nd and 3rd,
  etc. This pattern continues.

\vspace{3mm}

\begin{figure} [!htbp]
\centerline{
\includegraphics[scale=1.1]{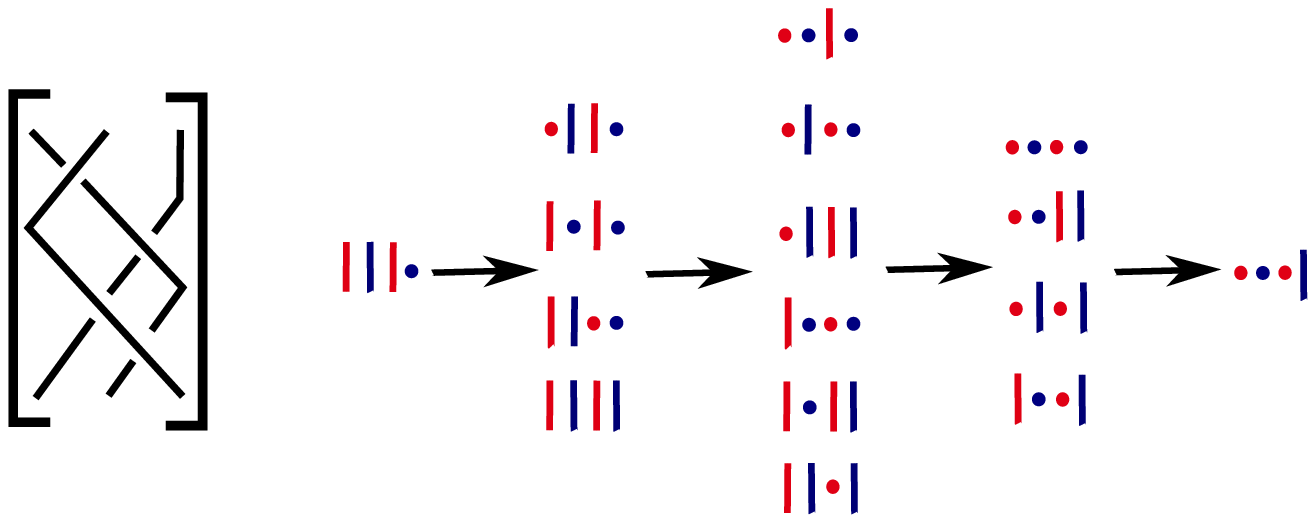}}
\label{example3}
\end{figure}\

\end{enumerate}

\pagebreak

\section{Definition of the Functor}
\label{sec-defn}

We extend Rouquier's complexes to a functor $F$ from the combinatorial braid cobordism category to the category of chain complexes in $\mc{SC}_2$ modulo homotopy.  Rouquier already defined how the functor acts on objects, so it only remains to define chain maps for each of the movie generators, and check the movie move relations.

There are four basic types of movie generators: birth/death of a crossing, slide, Reidemeister $2$ and Reidemeister $3$.

\begin{itemize}
\item \textbf{Birth and Death generators}
\begin{figure} [!htbp]
\centerline{
\includegraphics[scale=.9]{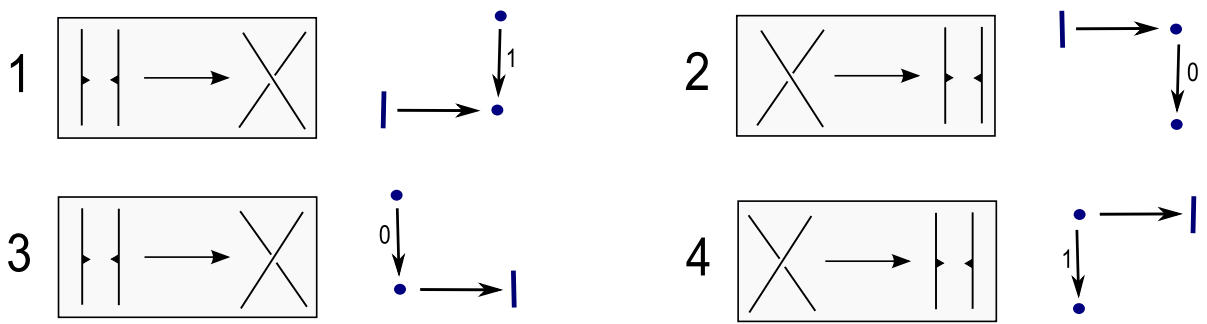}}
\caption{Birth and Death of a crossing generators} \label{MMgenBD}
\end{figure}\

\item \textbf{Reidemeister $2$ generators}
\begin{figure} [!htbp]
\centerline{
\includegraphics[scale=.9]{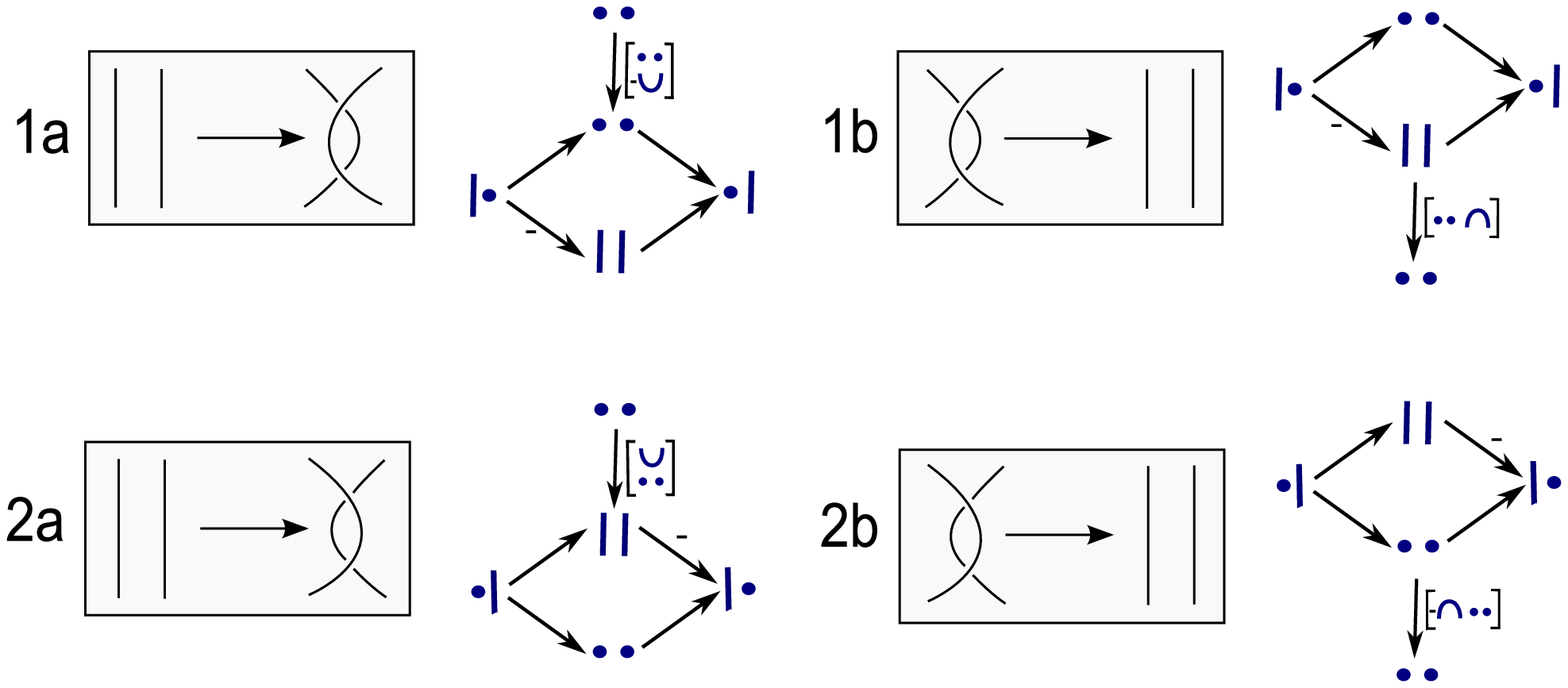}}
\caption{Reidemeister $2$ type movie move generators} \label{MMgenR2}
\end{figure}\

\pagebreak 

\item \textbf{Slide generators}
\begin{figure} [!htbp]
\centerline{
\includegraphics[scale=.9]{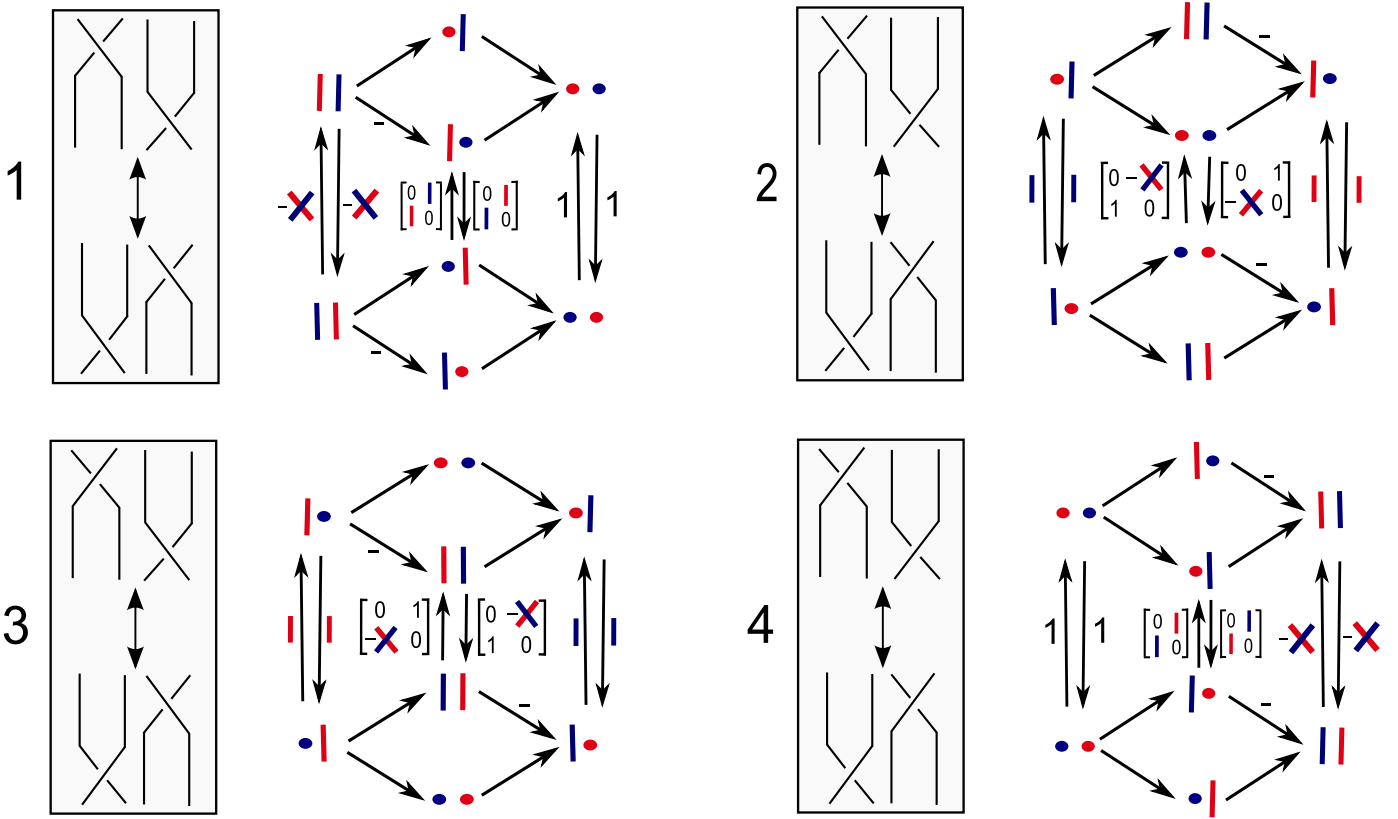}}
\caption{Slide generators} \label{MMgenSlides}
\end{figure}\

\vspace{10mm}

\item \textbf{Reidemeister $3$ generators} There are $12$ generators in all: 6 possibilities for the height orders of the 3 strands (denoted by a
number 1 through 6), and two directions for the movie (denoted "a" or "b"). Thankfully, the color-switching symmetries of the Soergel calculus allow
us explicitly list only $6$. The left-hand column lists the generators, and the chain complexes they correspond to; switching colors in the
complexes yields the corresponding generator listed on the right. Each of these variants has a free parameter $x$, and the parameter used
for each variant is actually independent from the other variants.

\begin{remark} Using sequences of R$2$-type generators and various movie moves we could have abstained from ever defining certain R$3$-type variants
or proving the movie moves that use them. We never use this fact, and list all here for completeness.\end{remark}

\begin{figure} [!htbp] \centerline{
\includegraphics[scale=.75]{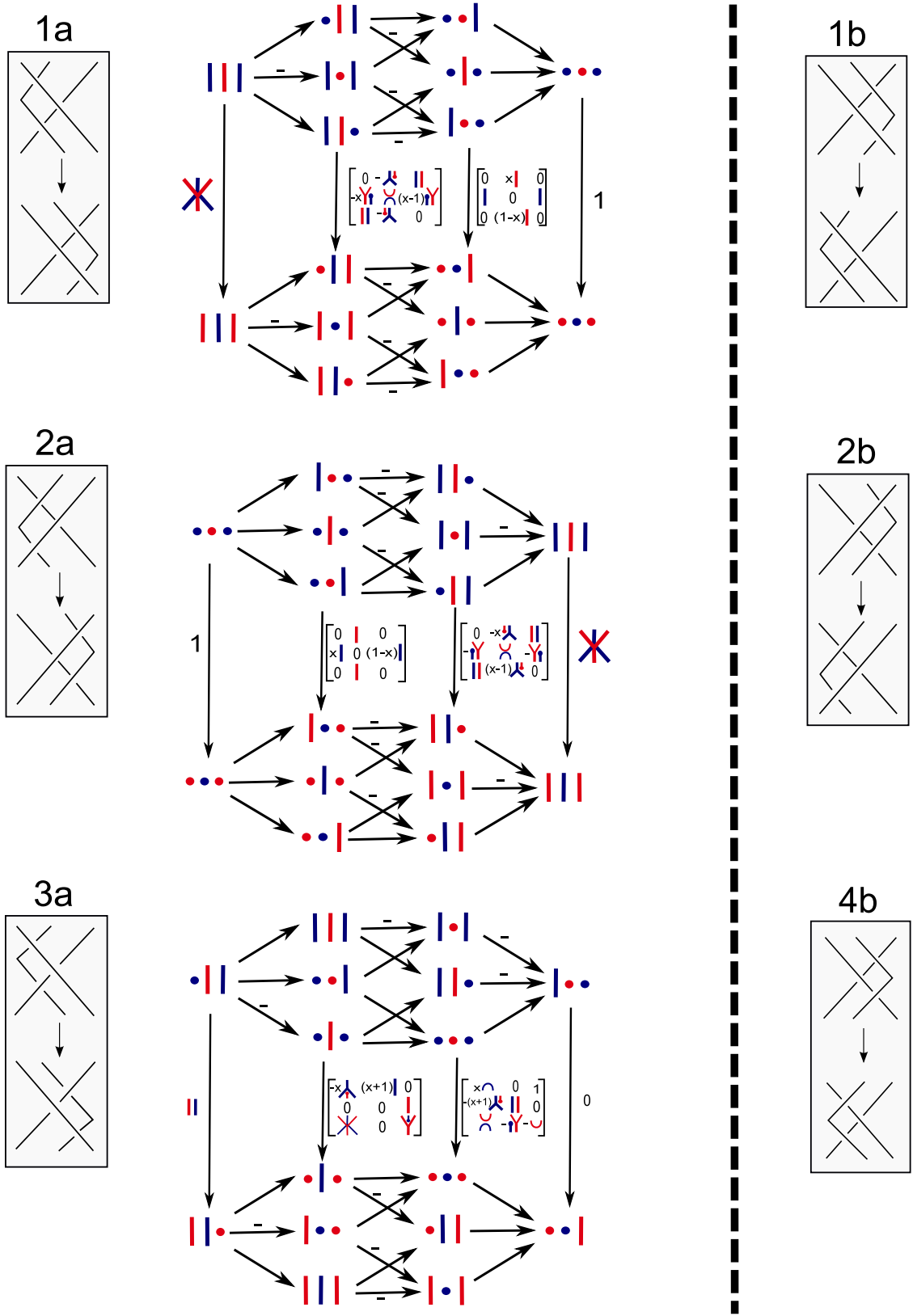}} \caption{Reidemeister $3$ type movie move generators}
\label{MMgenR3A} \end{figure}

\begin{figure} [!htbp]
\centerline{
\includegraphics[scale=.75]{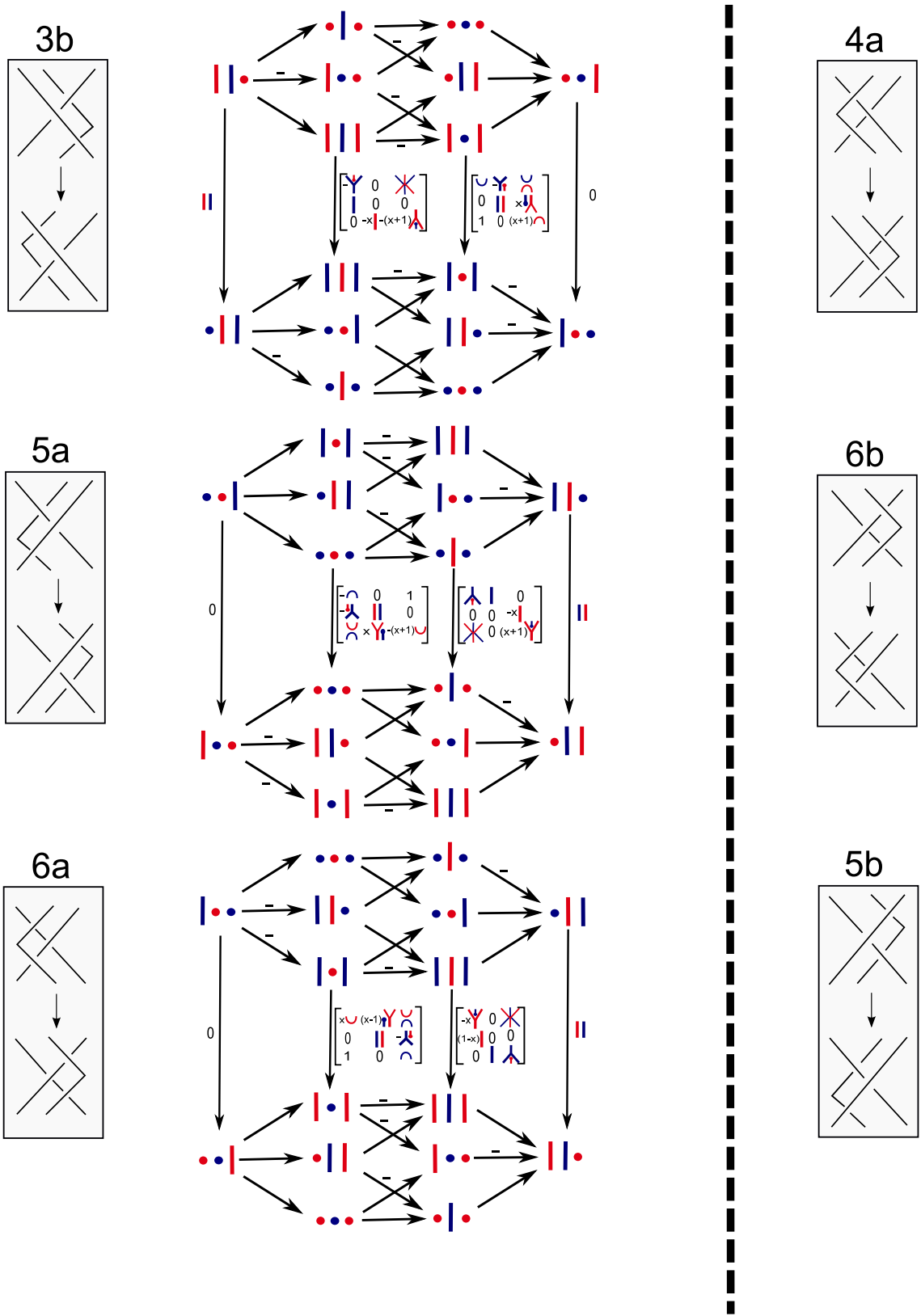}}
\caption{Reidemeister $3$ type movie move generators} \label{MMgenR3B}
\end{figure}

\end{itemize}

\pagebreak

\begin{claim} Up to homotopy, each of the maps above is independent of $x$.

\proof We prove the claim for generator 1a above; all the others follow from essentially the same computation. One can easily observe
that there are very few summands of the source complex which admit degree -1 maps to summands of the target complex. In fact, the unique (up to
scalar) non-zero map of homological degree -1 and graded degree -1 is a red trivalent vertex: a red fork which sends the single red line in the
second row of the source complex to the double red line in the second row of the target complex. Given two chain maps, one with free variable $x$
and one with say $x'$, the homotopy is given by the above fork map, with coefficient $(x-x')$. The homotopies for the other variants are exactly the
same, save for the position, color, and direction of the fork (there is always a unique map of homological and graded degree -1). \end{claim}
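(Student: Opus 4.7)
The plan is to prove the claim by exhibiting an explicit chain homotopy between the chain map with parameter $x$ and the chain map with parameter $x'$. By linearity in the parameter, it suffices to treat the difference, which is a chain map of homological degree $0$ and graded degree $0$. To show it is null-homotopic I need a map $h$ of homological degree $-1$ and graded degree $-1$ (so that $dh+hd$ has the correct bidegree, since differentials have graded degree $+1$) satisfying $f_x - f_{x'} = dh + hd$.

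The first step is to enumerate, for generator 1a, the summands of the source and target Rouquier complexes in each homological degree. Each has $2^3 = 8$ summands, but the bidegree constraint on $h$, combined with the graded rank formula $\grdrk\,\HOM(B_{\ii}, B_{\jj}) = (b_{\ii}, b_{\jj})$ of Section~\ref{subsec-hecke}, drastically restricts the possibilities: most pairs of summands admit no homogeneous morphism of graded degree $-1$ whatsoever. I expect to find exactly one pair where such a morphism exists and is unique up to scalar, namely a source summand containing a single red $B_i$ and a target summand containing the adjacent tensor $B_i \otimes B_i$, with the unique map being the red trivalent vertex (fork). Identifying this pair amounts to locating, in the second homological row of each complex, the summand in which the $B_{i+1}$ crossings have been replaced by $B_{\emptyset}$ while the $B_i$ crossing is preserved.

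Having located the only candidate, I set $h = (x-x')\cdot \mathrm{fork}$ on that pair and zero elsewhere, and compute $dh + hd$ diagrammatically using the relations of Section~\ref{subsec-diagrammatics}. The incoming differentials attach dots to the fork, producing double-dot polynomials and triggering the dot-slide and needle relations; these should combine to yield exactly the $x$-dependent terms appearing in $f_x$, scaled by $(x-x')$. The signs must be tracked using the parity-of-lines convention from Section~\ref{subsec-conventions}.

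The main obstacle I anticipate is the combinatorial bookkeeping: while the strategic outline is transparent, one must carefully match signs from the Liebnitz rule, keep track of which strands carry the $x$-weighted dots in the chain map $f_x$, and confirm that the diagrammatic expansion of $dh+hd$ lands precisely on those terms. A secondary difficulty is justifying the claim for the eleven other variants beyond 1a; the author asserts that the computation is ``essentially the same,'' but one must check for each variant that the degree $-1$ Hom space between the relevant pair of summands remains one-dimensional and spanned by an appropriately positioned and colored fork, with the remaining pairs of summands admitting no degree $-1$ morphism at all. This uniqueness is the conceptual content that makes the bookkeeping tractable.
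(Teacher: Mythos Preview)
Your approach is essentially identical to the paper's: both identify that the space of homological degree $-1$, graded degree $-1$ maps between the two complexes is one-dimensional, spanned by a single trivalent vertex (fork), and both take the homotopy to be $(x-x')$ times that fork. Your proposal is slightly more explicit in that you outline verifying $dh+hd = f_x - f_{x'}$ diagrammatically, whereas the paper simply asserts the homotopy works; but the core argument is the same. One small bookkeeping slip: in generator 1a the source braid is $O_iO_{i+1}O_i$ and the target is $O_{i+1}O_iO_{i+1}$, so the ``single red line'' summand in the source is the one where the two $B_i$ factors are replaced by $B_\emptyset$ and the lone $B_{i+1}$ survives, while the ``double red line'' in the target is $B_{i+1}\,R\,B_{i+1}$ --- your description has the roles of $i$ and $i+1$ reversed, but this does not affect the argument.
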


\begin{remark} For all movie generators, there is a summand of both the source and the target which is $B_{\emptyset}$. We have clearly used the
convention that for Type I movie generators, the induced map from the $B_{\emptyset}$ summand in the source to the $B_{\emptyset}$ summand in the
target is the identity map. It is true that, with this convention, the chain maps above are the unique chain maps which would satisfy the movie move
relations, where the only allowable freedom is given by the choice of various parameters $x$ (exercise). There is no choice up to
homotopy, so this is a unique solution. \end{remark}

\begin{remark} Ignoring this convention, each of the above maps may be multiplied by an invertible scalar. Some relations must be imposed between
these scalars, which the reader can determine easily by looking at the movie moves (each side must be multiplied by the same scalar). Movie move 11
forces all slide generators to have scalar $1$. Movie move 13 forces all R3 generators to have scalar $1$. Movie move 14 and 2 combined force the
scalar for any R2 generator to be $\pm 1$, and then movie moves 2 and 5 force this sign to be the same for all 4 variants. Movie move 12 shows that
the scalar for the birth of an overcrossing and the death of an undercrossing are related by the sign for the R2 generator. So the remaining freedom
in the definition of the functor is precisely a choice of one sign and one invertible scalar. \end{remark}

\section{Checking the Movie Moves}
\label{sec-moviemoves}

%
\subsection{Simplifications}
\label{subsec-simplify}
%

Given that the functor $F$ has been defined explicitly, checking that the movie moves hold up
to homotopy can be done explicitly.  One can write down the chain maps for both complexes, and either
check that they agree, or explicitly find the homotopy which gives the difference.  This is not
difficult, and many computations of this form were done as sanity checks. However, there
are so many variants of each movie move that writing down every one would take far too long.

Thanks to Morrison, Walker, and Clark \cite{CMW}, a significant amount of work
can be bypassed using a clever argument. The remainder of this section merely repeats results from that paper.

\begin{notation} Let $P,Q,T$ designate braid diagrams. $\Hom(P,Q)$ will designate the hom space between $F(P), F(Q)$ in
  the homotopy category of complexes in $\mc{SC}_2$. We write $\HOM$ for the graded vector space of
  all morphisms of complexes (not necessarily in degree 0).  $\Hom(B_{\ii},B_{\jj})$ will still
  designate the morphisms in $\mc{SC}_1$. Let $\1$ designate the crossingless braid diagram.
\end{notation}

\begin{lemma} (see \cite{CMW}) Suppose that Movie Move 2 holds. Then there is an adjunction
  isomorphism $\Hom(PO_i,Q) \to \Hom(P,QU_i)$, or more generally $\Hom(PT,Q) \to
  \Hom(P,Q\overline{T})$. Similarly for other variations: $\Hom(O_iP,Q) \to \Hom(P,U_iQ)$,
  $\Hom(P,QO_i) \to \Hom(PU_i,Q)$, etc.
\end{lemma}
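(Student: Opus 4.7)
The plan is to realize this adjunction via the standard categorical argument: take the Reidemeister 2 movie generators as unit and counit, use monoidality of $F$ to build the mutually inverse maps of Hom sets, and invoke Movie Move 2 as the triangle (zigzag) identity.

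First I would fix notation for the unit and counit. Because $F$ is monoidal, the Reidemeister 2 generators give chain maps $\eta \colon F(\1) \to F(O_i U_i) = F(O_i) \otimes F(U_i)$ and $\epsilon \colon F(U_i O_i) = F(U_i) \otimes F(O_i) \to F(\1)$. Given $f \in \Hom(PO_i, Q)$, define
\[
\Phi(f) \;=\; (f \otimes 1_{F(U_i)}) \circ (1_{F(P)} \otimes \eta) \;\colon\; F(P) \to F(QU_i).
\]
Conversely, given $g \in \Hom(P, QU_i)$, define
\[
\Psi(g) \;=\; (1_{F(Q)} \otimes \epsilon) \circ (g \otimes 1_{F(O_i)}) \;\colon\; F(PO_i) \to F(Q).
\]
Both $\Phi$ and $\Psi$ are well defined on homotopy classes because $\eta,\epsilon$ are fixed chain maps and tensoring with identities preserves homotopy.

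Next I would compute the round trip. A direct (interchange-law) calculation shows
\[
\Psi(\Phi(f)) \;=\; f \circ \bigl(1_{F(P)} \otimes \zeta\bigr),
\]
where $\zeta \colon F(O_i) \to F(O_i)$ is the zigzag
\[
\zeta \;=\; (1_{F(O_i)} \otimes \epsilon) \circ (\eta \otimes 1_{F(O_i)}).
\]
The key observation is that $\zeta$ is exactly the chain map assigned to the movie which starts at the single crossing $O_i$, inserts an $O_i U_i$ pair on the right via the birth R2 to produce $O_i O_i U_i$, and then collapses the middle $O_i U_i$ via the death R2 to return to $O_i$. This movie is precisely one of the standard variants of Movie Move 2 equated with the constant movie on $O_i$. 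Assuming Movie Move 2 holds, $\zeta$ is homotopic to $1_{F(O_i)}$, and therefore $\Psi(\Phi(f)) \simeq f$. The symmetric zigzag $(\epsilon \otimes 1_{F(U_i)}) \circ (1_{F(U_i)} \otimes \eta) \simeq 1_{F(U_i)}$ handles $\Phi(\Psi(g)) \simeq g$, again by Movie Move 2 (in a reflected orientation).

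Finally, the other variants listed in the lemma ($\Hom(O_iP, Q) \to \Hom(P, U_iQ)$, $\Hom(P, QO_i) \to \Hom(PU_i, Q)$, and the general tensor-factor replacement $\Hom(PT, Q) \to \Hom(P, Q\overline{T})$) follow by the same argument: place the R2 unit and counit on the opposite side, and for general $T$ proceed by induction on the number of crossings in $T$, using that $\overline{T_1 T_2} = \overline{T_2}\,\overline{T_1}$ and that a one-crossing adjunction composes with the inductive hypothesis. The main obstacle is purely bookkeeping: there are several reflected/reversed variants of Movie Move 2 (corresponding to whether the inserted pair is $O_i U_i$ or $U_i O_i$, and whether the zigzag curves left or right), and one must verify that the particular variant produced by each round-trip computation really matches one of the versions of Movie Move 2 whose validity we are assuming; once the diagrammatic conventions of Section \ref{subsec-conventions} are aligned, this match is immediate.
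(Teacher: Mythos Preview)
Your argument is correct and is exactly the paper's approach, just written out in more detail: the paper defines $\Phi$ in the same way (R2 unit followed by $f\otimes\id_{U_i}$) and simply asserts that the composite with the reverse map being the identity ``is exactly Movie Move 2,'' which is precisely your zigzag computation. One small slip: your displayed formula $\zeta=(1_{F(O_i)}\otimes\epsilon)\circ(\eta\otimes 1_{F(O_i)})$ inserts $O_iU_i$ on the \emph{left} to produce $O_iU_iO_i$ and then collapses the right-hand $U_iO_i$, not on the right to produce $O_iO_iU_i$ as your prose says---but either reading is a variant of Movie Move 2, so nothing is lost.
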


\begin{proof} Given a map $f \in \Hom(PO_i,Q)$, we get a map in $\Hom(P,QU_i)$ as follows: take the
  R2 movie from $P$ to $PO_iU_i$, then apply $f \TenR \id_{U_i}$ to $QU_i$.  The reverse adjunction
  map is similar, and the proof that these compose to the identity is exactly Movie Move 2.
\end{proof}

\begin{cor} For any braid $P$, $\Hom(P,P) \cong \Hom(\1,P\overline{P})$. \end{cor}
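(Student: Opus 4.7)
The plan is to derive this as a direct instance of the adjunction isomorphism just proved, with no new ingredients needed. Specifically, I would take the form of the lemma that reads $\Hom(PT,Q) \cong \Hom(P,Q\overline{T})$ and apply it with the outer (leftmost) factor of the source chosen to be the crossingless diagram $\1$, with $T = P$, and with $Q = P$. The lemma then produces an isomorphism
\[
\Hom(\1 \cdot P,\, P) \;\cong\; \Hom(\1,\, P \cdot \overline{P}).
\]

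To conclude, I would note that $\1$ is a unit for concatenation of braid words: the empty sequence is the monoidal identity in the combinatorial braid cobordism category, so $\1 \cdot P$ and $P$ are literally equal as objects, and hence $F(\1 \cdot P) = F(P)$. Substituting this identification on the left-hand side gives $\Hom(P,P) \cong \Hom(\1, P\overline{P})$, which is the statement of the corollary.

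There is essentially no obstacle here. The content of the corollary is already contained in the lemma, which in turn relied only on Movie Move 2; the present step is a bookkeeping observation about the unit of concatenation. I would record the corollary mainly for later use, since rewriting endomorphism spaces of an arbitrary braid as morphisms from the identity braid into a self-cancelling word tends to be the convenient form for verifying that composed chain maps are (homotopic to) the identity when checking the remaining movie moves.
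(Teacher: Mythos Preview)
Your proposal is correct and matches the paper's approach: the corollary is stated there without proof as an immediate consequence of the adjunction lemma, and your substitution $P \mapsto \1$, $T \mapsto P$, $Q \mapsto P$ together with $\1 \cdot P = P$ is exactly the intended one-line deduction.
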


Note that in the braid group, $P\overline{P} = \1$.

\begin{lemma} Suppose that Movie Moves 3, 5, 6, and 7 hold. Then if $P$ and $Q$ are two braid
  diagrams which are equal in the braid group, then $\Hom(P,T) \cong \Hom(Q,T)$. \end{lemma}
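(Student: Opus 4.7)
The plan is to show that $F(P)$ and $F(Q)$ are homotopy equivalent as complexes in $\mc{SC}_2$ whenever $P=Q$ in the braid group, from which the desired isomorphism $\Hom(P,T)\cong\Hom(Q,T)$ follows by applying $\HOM(-,F(T))$, which preserves homotopy equivalence.

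First, I would reduce to the case where $P$ and $Q$ differ by a single ``elementary'' braid move. Two braid words are equal in the braid group if and only if one can be transformed into the other by a finite sequence of the following local moves: (i) R2 cancellations $\tau_1 O_i U_i \tau_2 \leftrightarrow \tau_1\tau_2$ and $\tau_1 U_i O_i \tau_2 \leftrightarrow \tau_1\tau_2$, (ii) distant commutations (slides), and (iii) Reidemeister III moves. Each of these is realized as a movie generator of the combinatorial braid cobordism category, so $F$ assigns a chain map to each. By induction on the number of such moves connecting $P$ to $Q$, it suffices to show that each elementary move induces a homotopy equivalence.

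Second, I would check that each elementary move yields a homotopy equivalence by verifying that the forward and reverse movies compose to the identity up to homotopy. For case (i), this is exactly the content of Movie Move 2, which was used in the previous lemma to set up the adjunction. For case (ii), the composition of a slide with its reverse is a closed movie from a braid to itself, and the requirement that it be homotopic to the identity is precisely Movie Move 3 (together with its sign variants, which follow by the symmetries built into the definition of the slide generators). For case (iii), the analogous round-trip involving R3 movies is controlled by Movie Moves 5, 6, and 7, which between them handle all height/parity configurations of the three strands involved. In each case, the asserted movie move says that $F$(forward)$\circ F$(reverse)$\simeq\id$ and $F$(reverse)$\circ F$(forward)$\simeq\id$, so the chain map is invertible in the homotopy category.

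The main obstacle will be bookkeeping: each of the braid relations has many variants (over/undercrossing choices, height orderings, left/right versions) and one must argue that Movie Moves 3, 5, 6, 7 genuinely account for all of them. I would handle this by appealing to the Carter--Saito classification, which explicitly lists the variants, together with the color-switching and horizontal/vertical reflection symmetries already used in the definition of the slide and R3 generators to cut down the number of cases that require a direct check. Once every elementary move is known to induce a homotopy equivalence, the induction on the length of the relating sequence gives $F(P)\simeq F(Q)$, and the lemma follows.
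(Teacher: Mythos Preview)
Your overall strategy is exactly the paper's: reduce to elementary braid moves (R2, R3, distant slides), show each one induces a homotopy equivalence via the round-trip movie moves, and conclude $F(P)\simeq F(Q)$.

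There is, however, a misattribution of which movie moves do what, and it creates a genuine gap relative to the stated hypotheses. You handle R2 invertibility by invoking Movie Move 2, but MM2 is \emph{not} among the hypotheses of this lemma---only MM3, 5, 6, 7 are assumed. In the paper's numbering, MM5 and MM6 are the round-trip moves that show the R2 generators are homotopy equivalences (each has two variants, matching the R2 configurations), while MM7 alone, with its twelve variants (one per R3 generator), handles all R3 round trips. MM2 is reserved for the adjunction lemma that precedes this one. So your case (i) should cite MM5 and MM6, and your case (iii) should cite only MM7; as written, you both appeal to something outside the hypotheses and leave R2 unaccounted for by the moves you are actually allowed to use. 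Once the labels are corrected, your argument coincides with the paper's.
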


\begin{proof} If two braid diagrams are equal in the braid group, one may be obtained from the other
  by a sequence of R2, R3, and distant crossing switching moves. Put together, these movie moves
  imply that all of the above yield isomorphisms of complexes.  Thus $P$ and $Q$ have isomorphic
  complexes.
\end{proof}

\begin{remark} Technically, we don't even need these movie moves, only the resulting isomorphisms, which were already shown by Rouquier. However,
since these movie moves are easy to prove and we desired the proofs in this paper to be self-contained, we show the movie moves directly.
\end{remark}

Now the complex associated to $\1$ is just $B_{\emptyset}$ in homological degree 0.  So $\HOM(\1,\1)
= \HOM(B_{\emptyset},B_{\emptyset})$, which we have already calculated is the free polynomial ring
generated by double dots.  In particular, the degree 0 morphisms are just multiples of the identity.
Remember, this is a non-trivial fact in the graphical context! We will say more about this in Section \ref{subsec-brute}.

Putting it all together, we have

\begin{cor} Suppose that Movie Moves 2,3,5,6,7 all hold. If $P$ and $Q$ are braid diagrams which are
  equal in the braid group, then $\Hom(P,Q) \cong \Bbbk$, a one-dimensional vector space. \end{cor}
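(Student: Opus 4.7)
The plan is to assemble the two preceding lemmas with the explicit computation of $\HOM(B_{\emptyset},B_{\emptyset})$ to pin down $\Hom(P,Q)$ exactly. The strategy is: first convert $\Hom(P,Q)$ into a hom space out of the trivial braid $\1$ via adjunction; then use braid-group equivalence to collapse the second argument to $\1$; then quote the known computation of endomorphisms of $B_{\emptyset}$.

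Concretely, I would proceed as follows. Apply the adjunction lemma (which uses Movie Move 2) to obtain an isomorphism
\[
\Hom(P,Q) \;\cong\; \Hom(\1,\, Q\overline{P}).
\]
Since $P$ and $Q$ represent the same element of the braid group, $Q\overline{P} = \1$ in the braid group, i.e.\ the diagram $Q\overline{P}$ is related to the trivial diagram $\1$ by a sequence of Reidemeister 2, Reidemeister 3, and distant commutation moves. By the second lemma, Movie Moves 3, 5, 6, 7 guarantee that each such move yields an isomorphism of complexes; applying this chain of isomorphisms in the second argument (which is legitimate, since an isomorphism $F(Q\overline{P}) \cong F(\1)$ of complexes induces an isomorphism of hom spaces out of $F(\1)$) gives
\[
\Hom(\1,\, Q\overline{P}) \;\cong\; \Hom(\1,\1).
\]

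Finally, $F(\1) = B_{\emptyset}$ concentrated in homological degree $0$, so $\Hom(\1,\1)$ is the degree-zero part of $\HOM(B_{\emptyset},B_{\emptyset})$. By the corollary to Theorem \ref{colorreduction}, this graded ring is the polynomial ring $\Bbbk[f_i]$ generated by double dots in degree $2$, whose degree-zero component is precisely $\Bbbk$. Stringing the three isomorphisms together yields $\Hom(P,Q) \cong \Bbbk$.

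The main potential obstacle is the middle step: verifying that the lemma giving $\Hom(P,T) \cong \Hom(Q,T)$ for $P = Q$ in the braid group has an analogue allowing us to vary the second argument instead. This is essentially automatic once one observes that the lemma really produces homotopy equivalences of the underlying complexes $F(P) \simeq F(Q)$, from which isomorphisms in both hom-slots follow; nonetheless, it is the only non-bookkeeping point that needs to be made explicit. Everything else is a direct quotation of results already in hand.
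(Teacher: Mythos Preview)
Your proof is correct and follows essentially the same route as the paper: adjunction via Movie Move 2, then isomorphism of complexes from Movie Moves 3,5,6,7, then the computation of degree-zero endomorphisms of $B_{\emptyset}$. You even anticipate and correctly resolve the one point worth noting---that the second lemma is really proved by exhibiting a homotopy equivalence of complexes, so it applies in either hom-slot---which the paper leaves implicit in the phrase ``Putting it all together.''
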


The practical use of finding one-dimensional Hom spaces is to apply the following method.

\begin{defn} (See \cite{CMW}) Consider two complexes $A$ and $B$ in an additive $\Bbbk$-linear
  category. We say that a summand of a term in $A$ is \emph{homotopically isolated} with respect to
  $B$ if, for every possible homotopy $h$ from $A$ to $B$, the map $dh+hd \colon A \to B$ is zero
  when restricted to that summand.
\end{defn}

\begin{lemma} Let $\phi$ and $\psi$ be two chain maps from $A$ to $B$, such that $\phi \equiv c\psi$
  up to homotopy, for some scalar $c \in \Bbbk$. Let $X$ be a homotopically isolated summand of
  $A$. Then the scalar $c$ is determined on $X$, that is, $\phi = c \psi$ on $X$. \end{lemma}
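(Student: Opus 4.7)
The plan is to unpack the definitions directly; this lemma is essentially tautological once the definition of homotopically isolated is in hand. By hypothesis, $\phi$ and $c\psi$ represent the same class in $\Hom(A,B)$ modulo homotopy, which means there exists a homotopy $h \colon A \to B$ (of appropriate degree) with
\[
\phi - c\psi = dh + hd.
\]
First I would restrict this equality to the summand $X$ of the relevant term of $A$. By the definition of homotopically isolated, the map $dh + hd$ vanishes on $X$ for every choice of homotopy $h$, and in particular for the specific $h$ provided above. Therefore
\[
(\phi - c\psi)|_X = (dh + hd)|_X = 0,
\]
so $\phi|_X = c\,\psi|_X$, which is exactly the claim. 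In practical terms, whenever one knows that $\psi|_X$ is a nonzero morphism in the underlying (non-homotopy) category, the equation $\phi|_X = c\,\psi|_X$ determines $c$ uniquely, which is the usage envisioned in the remainder of the paper.

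There is no genuine obstacle here: the entire content of the argument is the observation that the definition of homotopically isolated was tailored precisely so that ambiguity coming from the choice of homotopy disappears when one restricts to $X$. The only thing worth pausing on is the slight abuse of notation in writing ``$\phi = c\psi$ on $X$'': one should read this as the equality of the two composite morphisms obtained by pre-composing $\phi$ and $c\psi$ with the inclusion of the summand $X$ into its ambient term of $A$. With that reading in place, the proof is the three-line chain of equalities displayed above.
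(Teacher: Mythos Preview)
Your proof is correct and is exactly the direct unpacking of the definition that the paper has in mind; the paper itself simply declares the proof trivial and refers to \cite{CMW}. There is nothing to add or change.
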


The proof is trivial, see \cite{CMW}.  The final result of this argument is the following corollary.

\begin{cor} Suppose that Movie Moves 2,3,5,6,7 all hold. If $P$ and $Q$ are braid diagrams which are
  equal in the braid group, and $\phi$ and $\psi$ are two chain maps in $\Hom(P,Q)$ which agree on a
  homotopically isolated summand of $P$, then $\phi$ and $\psi$ are homotopic.
\end{cor}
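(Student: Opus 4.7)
The proof strings together the two tools just collected. First, the preceding corollary guarantees that, under the assumed movie moves, $\Hom(P,Q)$ is one-dimensional whenever $P$ and $Q$ represent the same braid group element. Consequently, $\phi$ and $\psi$ must satisfy $\phi \equiv c\,\psi$ up to homotopy for some scalar $c \in \Bbbk$ (in the degenerate case $\psi \equiv 0$ one takes instead $\psi \equiv c'\phi$, and the argument that follows is symmetric).

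Next, apply the homotopically isolated summand lemma to the equivalence $\phi \equiv c\,\psi$ at the summand $X$ from the hypothesis. The lemma promotes this homotopy relation to a strict equality $\phi|_X = c\,\psi|_X$ of chain maps after restriction to $X$. Combining with the assumption $\phi|_X = \psi|_X$, we obtain $(c-1)\,\psi|_X = 0$; whenever $\psi|_X \neq 0$ this forces $c = 1$, and hence $\phi \equiv \psi$ up to homotopy, as desired.

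The only delicate point is the case $\psi|_X = 0$. If this occurs, applying the isolated summand lemma to $\psi \equiv b\,\iota$, where $\iota$ is a fixed nonzero generator of $\Hom(P,Q)$, gives $0 = \psi|_X = b\,\iota|_X$. Thus, provided $X$ is chosen so that $\iota|_X \neq 0$, we must have $b = 0$, i.e. $\psi$ is null-homotopic; the same argument applied to $\phi$ yields $\phi$ null-homotopic, and again $\phi \equiv \psi$. In this sense, the substantive work, and the main obstacle when one actually uses the corollary to verify a movie move, is choosing a homotopically isolated summand $X$ on which the canonical equivalence $\iota$ acts nontrivially. In the movie move verifications to follow, the natural choice will be the unique $B_{\emptyset}$ summand sitting in homological degree zero at the source, on which $\iota$ restricts to the identity and is therefore visibly nonzero.
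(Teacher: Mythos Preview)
Your proof is correct and follows the same two-step route as the paper: invoke the one-dimensionality of $\Hom(P,Q)$ to write $\phi \equiv c\psi$, then use the homotopically isolated lemma on $X$ to pin down $c=1$. The paper's argument is a terse two lines and silently assumes the restriction to $X$ is nonzero; you have been more scrupulous in treating the degenerate cases $\psi \equiv 0$ and $\psi|_X = 0$, correctly observing that the conclusion really requires the generator $\iota$ to act nontrivially on $X$ --- a hypothesis the paper leaves implicit but which is always satisfied in the applications (where $X = B_{\emptyset}$ and both maps restrict to the identity there).
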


\begin{proof} Because the Hom space modulo homotopy is one-dimensional, we know there exists a
  constant $c$ such that $\phi \equiv c \psi$. The agreement on the isolated summand implies that
  $c=1$. \end{proof}

Most of the movie generators are isomorphisms of complexes; only birth and death are not.  Hence,
Movie Moves 1 through 10 all consist of morphisms $P$ to $Q$, for $P$ and $Q$ equal in the braid
group.  Finding a homotopically isolated summand and checking the map on that summand alone will greatly
reduce any work that needs to be done.  Of course, one must show Movie Moves 2,3,5,6,7 independently
before this method can be used.

One final simplification, also found in Morrison, Walker and Clark, is that modulo Movie Move 8 all
variants of Movie Move 10 are equivalent. Hence we can prove Movie Move 10 by investigating solely the
overcrossing-only variant.

These simplifications apply to any functorial theory of braid cobordisms, so long as $\Hom(\1,\1)$
is one-dimensional.  Now we look at what we can say specifically about homotopically isolated
summands for Rouquier complexes in $\mc{SC}_2$.

Any homotopy must be a map of degree -1 (if we ignore degree shifts on objects, as in our
conventions).  There are very few maps of negative degree in $\mc{SC}_2$, a fact which immediately
forces most homotopies to be zero.  For instance, there are no negative degree maps from
$B_{\emptyset}$ to $B_i$, for any $i$.  In an overcrossing-only braid, where
$B_{\emptyset}$ occurs in the maximal homological grading and various $B_i$ show up in the
penultimate homological grading, the $B_{\emptyset}$ summand is homotopically isolated! Thus the
overcrossing-only variant of Movie Move 10 will be easy.  In fact, because of the convention we use
that all isomorphism movie generators will restrict to multiplication by 1 from the $B_{\emptyset}$
summand to the $B_{\emptyset}$ summand, checking Movie Move 10 is immediate.

The only generators of negative degree are trivalent vertices. If each color appears no more than
once in a complex, then there can be no trivalent vertices, so no homotopies are possible. This will
apply to every variant of Movie Move 4, for instance.

Deducing possible homotopies is easy, as there are very few possibilities. For instance, the only nonzero maps which occur in homotopies outside
of Movie Move 10 are:

\vspace{3mm}

\begin{figure} [!htbp]
\centerline{
\includegraphics[scale=.85]{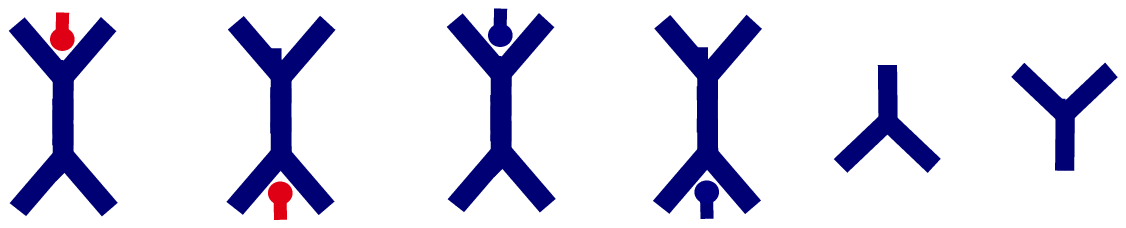}}
\label{homotopymaps}
\end{figure}\

\vspace{-1mm}

We will not use these simplifications to their maximal effect, since some checks are easy enough to do
without. For a discussion of other implications of
checking the movie moves by hand, see Section \ref{subsec-brute}.

%
\subsection{Movie Moves}
\label{subsec-moviemoves}
%

\note \textbf{(Logical sequence in the proofs of the movie moves.)} We list the movie moves in numerical order, as opposed to logical order of
interdependence. To use the technical lemma about homotopically isolated summands we first need to check movie moves 2,3,5,6,7. The reader will see
that we prove these through direct computation, relying on none of the other moves.


\begin{itemize} \item \textbf{MM1} There are eight variants of this movie (sixteen if you count the horizontal flip, which is just a color symmetry), of which we present two explicitly here. The key fact is that every slide
generator behaves the same way: chain maps on summands have either a color crossing with a minus sign, the identity map with a plus sign, or zero;
these maps occur precisely between the only summands where they make sense and, hence, have the same signs on both sides of the movie. Reversing
direction uniformly changes the sign on the cups or caps in the R2 move. The only interesting part of the check uses a twist of relation
(\ref{twist4}). We describe in detail the movie associated to the first generator in figure \ref{MM1a}, and give the composition associated to
generator $3$ in figure \ref{MM1b}. Note that this check is trivial anyway since every summand is homotopically isolated.

\begin{figure}[!htbp] \centerline{ \includegraphics[scale=.8]{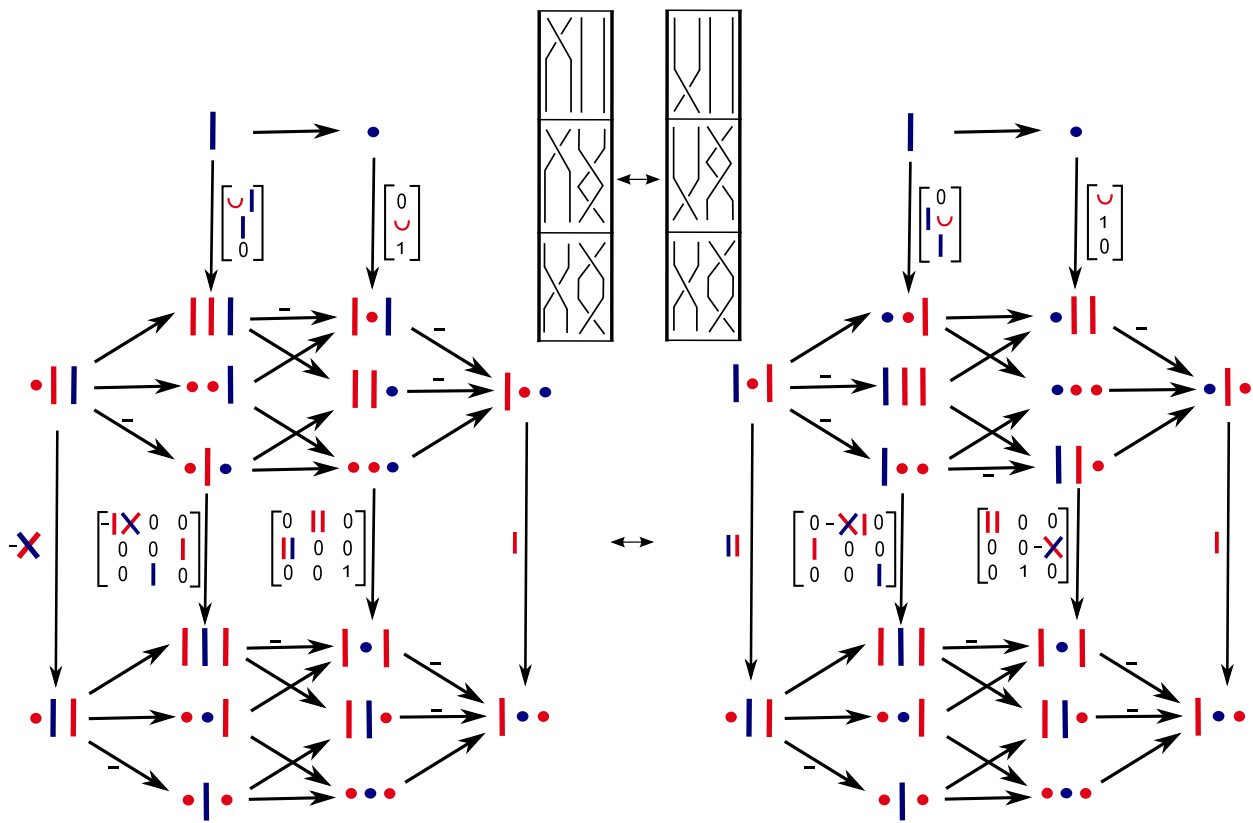}} \caption{Movie Move $1$
associated to slide generator $1$} \label{MM1a} \end{figure}

\pagebreak

 \begin{figure}[!htbp] \centerline{ \includegraphics[scale=.8]{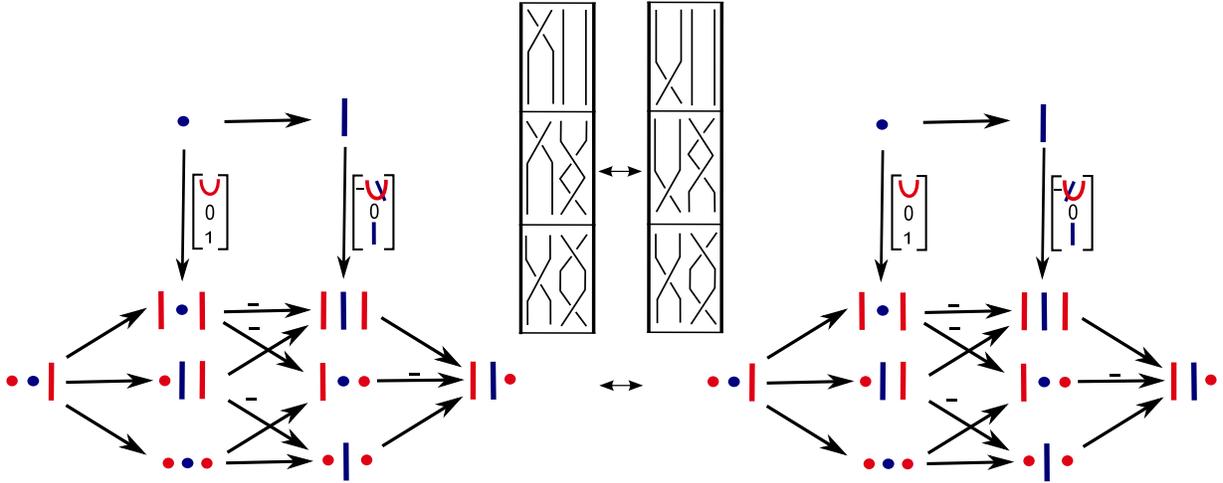}} \caption{Movie
Move $1$ associated to slide generator $3$} \label{MM1b} \end{figure}

\item \textbf{MM2} There are $4$ variants to deal with here; we describe only one, and similar reasoning to that of MM$1$ will convince the reader that the other $3$ are readily verified. The composition has the following form: 
\begin{figure}[!htbp] 
\centerline{
\includegraphics[scale=.8]{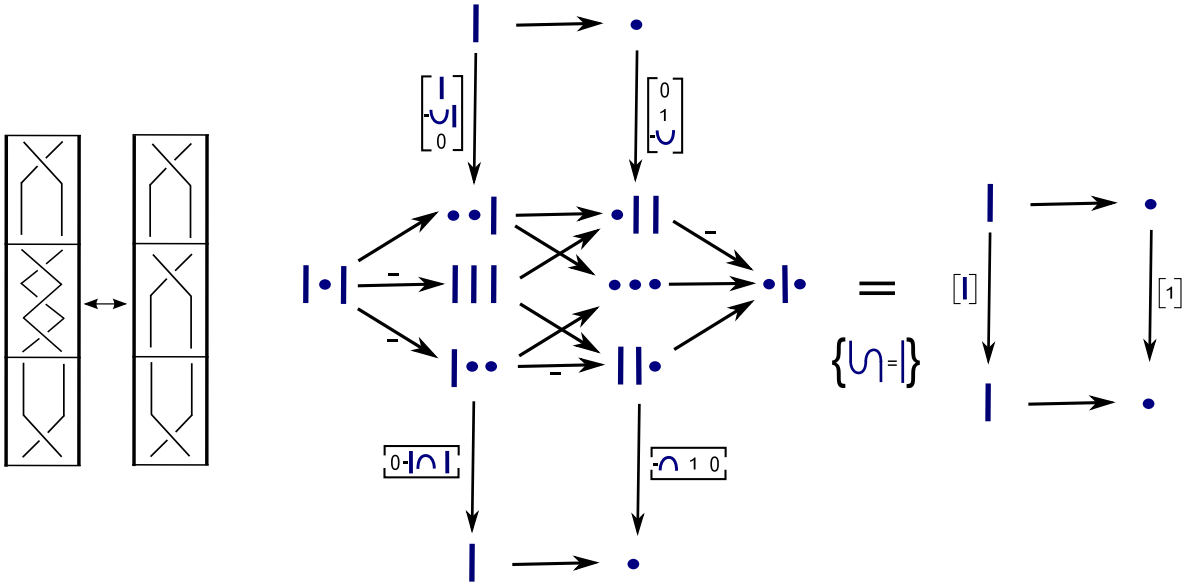}}
\caption{Movie Move $2$} \label{MM2}
\end{figure}

\pagebreak

\item \textbf{MM3} All $8$ movie move $3$ variants are essentially immediate after glancing at the slide generators, but we list one for posterity:
\begin{figure}[!htbp] 
\centerline{
\includegraphics[scale=.9]{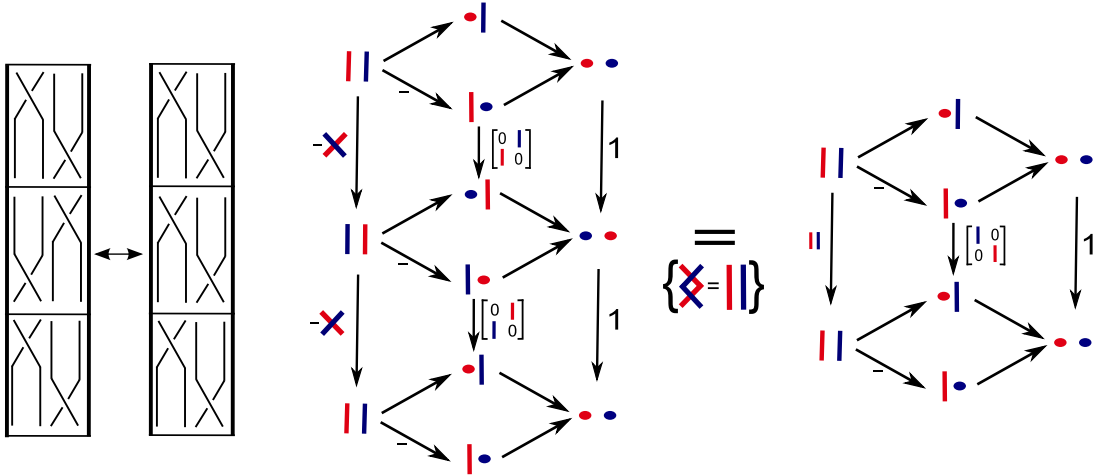}}
\caption{Movie Move $3$} \label{MM3}
\end{figure}


\item \textbf{MM4} At this point the conscientious reader will find all $16$ variants of movie move $4$ quite easy, for the regularity of the slide chain maps allows one to write the compositions for the left and right-hand side at once. The maps only differ at the triple-color crossings, so we have to make use of relation (\ref{distslide4}). 
\begin{figure}[!htbp] 
\centerline{
\includegraphics[scale=.8]{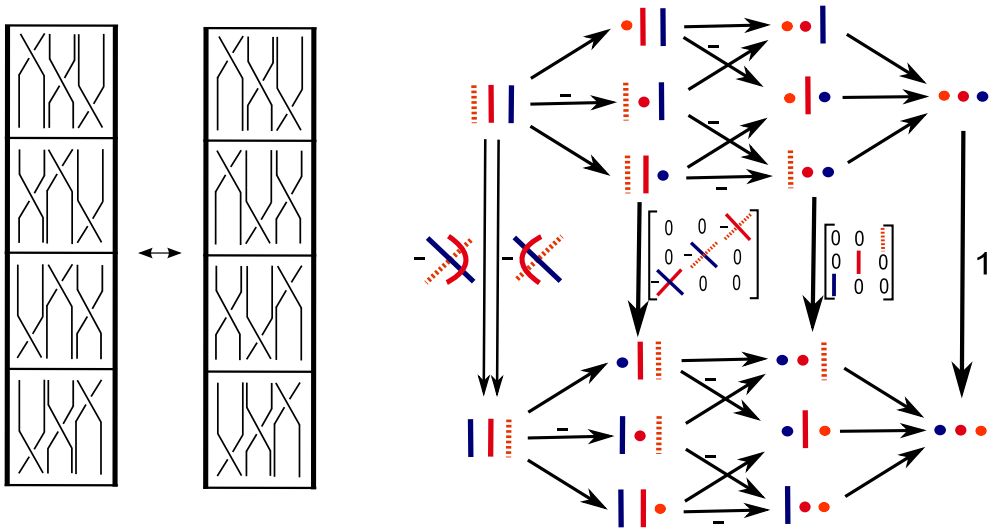}}
\caption{Movie Move $4$} \label{MM4}
\end{figure}

\pagebreak

\item \textbf{MM5} There are two variants of this movie, with the calculation for both almost identical. We consider the move associated to the first generator. The compostion has the following form: 

\vspace{4mm}

\begin{figure}[!htbp] 
\centerline{
\includegraphics[scale=.8]{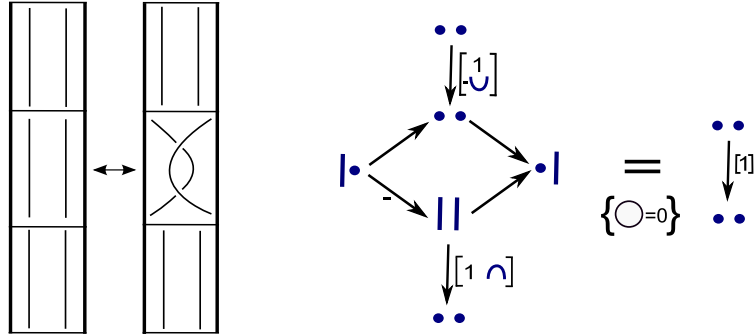}}
\caption{Movie Move $5$} \label{MM5}
\end{figure}

\vspace{7mm}

\item \textbf{MM6} Again there are two variants and the calculation is almost as easy as the one for MM5; the only difference is that here we
actually have to produce a homotopy. We check the variant associated to generator $1$; left arrows are the identity, right the composition, and
dashed the homotopy. Checking that the homotopy works requires playing with relation (\ref{dotslidesame}).

\begin{figure} [!htbp]
\centerline{
\includegraphics[scale=.8]{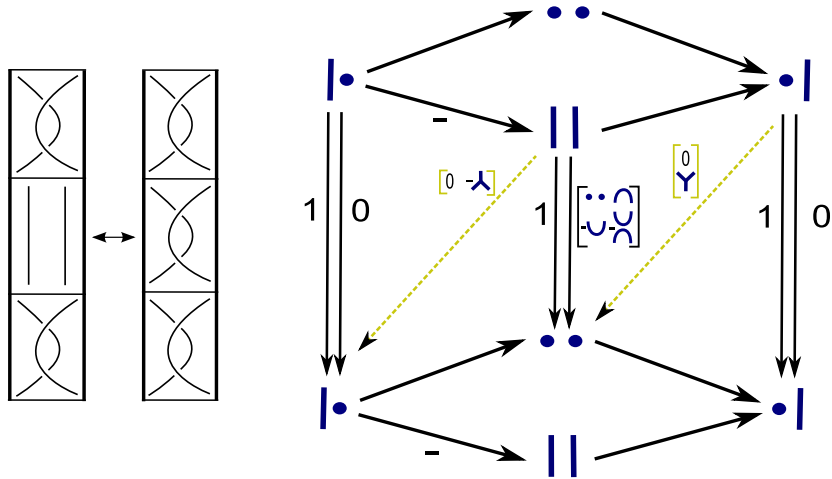}}
\caption{Movie Move $6$} \label{MM6}
\end{figure}

\pagebreak

\item \textbf{MM7} There are $12$ variants of MM$7$, one for each R3 generator, and color symmetry will immediately reduce the number of different checks to
$6$; nevertheless, this is still a bit a drudge as each one requires a homotopy and a minor exercise in the relations. We display the movie associated to
generator $1$a and leave it to the very determined reader to repeat a very similar computation the remaining $5$ times. The chain maps for the left-hand side
of the movie are the following:

\vspace{7mm}

\begin{figure} [!htbp]
\centerline{
\includegraphics[scale=1]{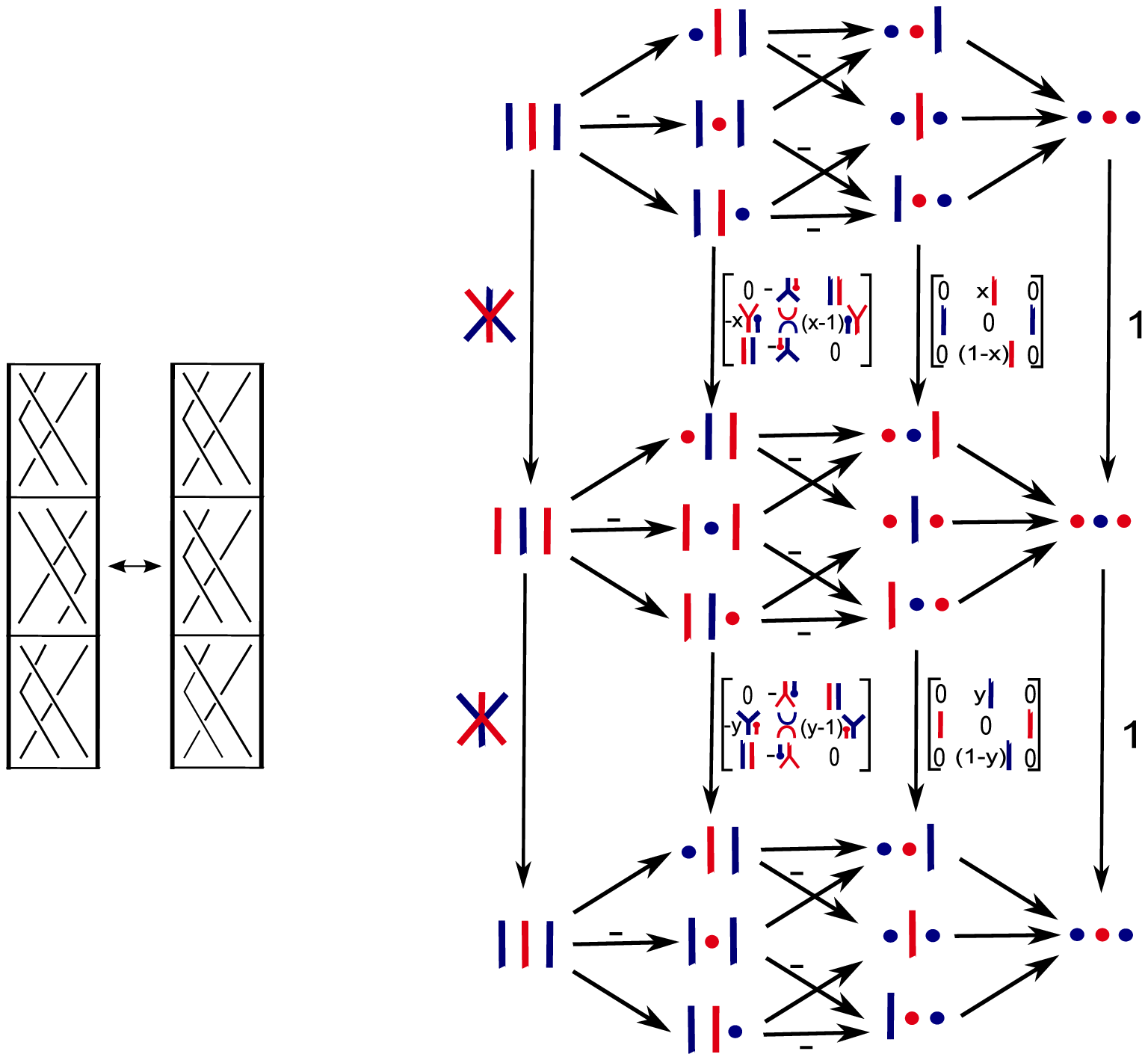}}
\caption{Movie Move $7$} \label{MM7a}
\end{figure}

\pagebreak 

The composition and homotopy is:

\begin{figure} [!htbp]
\centerline{
\includegraphics[scale=.8]{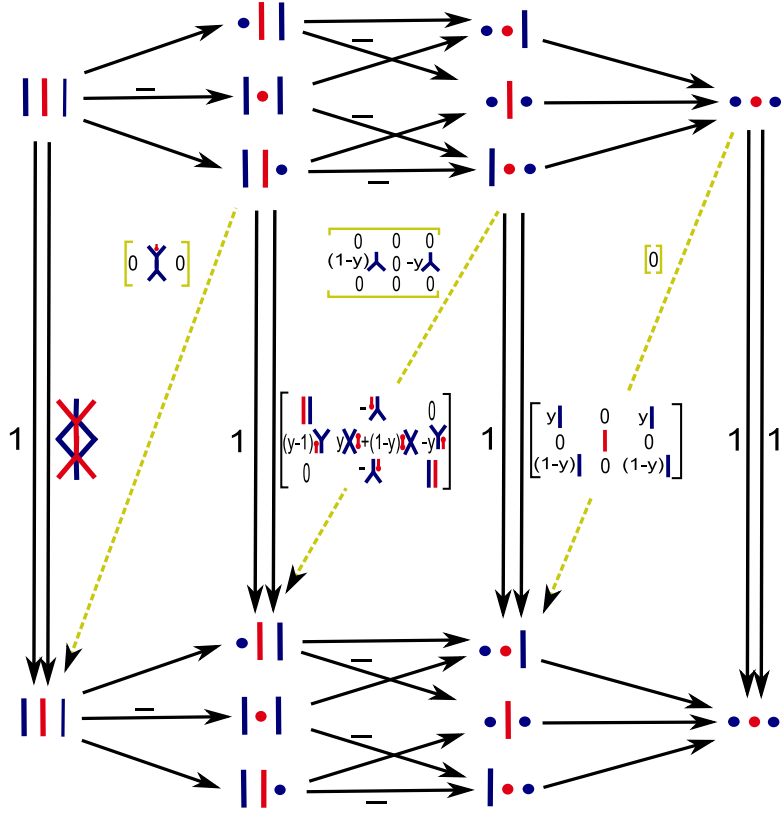}}
\caption{Homotopy for Movie Move $7$} \label{MM7b}
\end{figure}

To check that the prescribed maps actually give a homotopy between and composition and the identity still requires some manipulation. The
verification for the left-most map is simply relation (\ref{ipidecomp}). The verification for the right-most map is immediate, and for the third
map is simple. This leaves us with the second map. Here $dH + Hd=$

\begin{figure} [!htbp]
\centerline{
\includegraphics[scale=1]{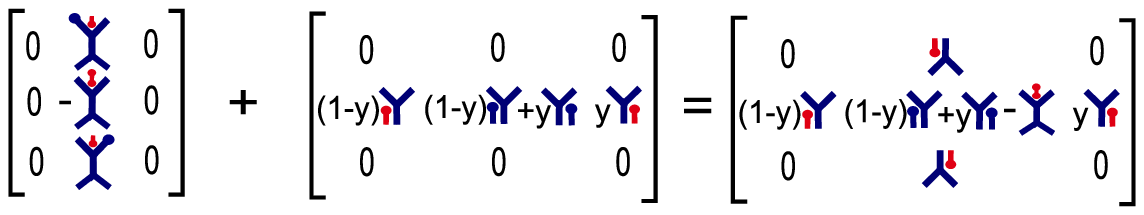}}
\label{MM7c}
\end{figure}

which save for the central entry is precisely the identity minus the composition. Equality of the central entry follows from this computation:

\begin{figure} [!htbp]
\centerline{
\includegraphics[scale=1]{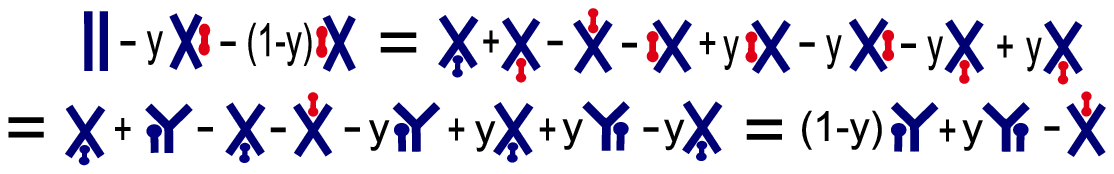}}
\label{MM7d}
\end{figure}

\vspace{-7mm}

\note This computation was done using relation (\ref{dotslidenear}) numerous times.


\item \textbf{MM8} There are twelve variants of MM$8$: $3!$ possibilities for height order, and two directions the movie can run. All twelve are
dealt with by the same argument, using a homotopically isolated summand. There are no degree $-1$ maps from $B_{\emptyset}$ to any summand in the
target, since there are at most two lines of a given color in the target, so we can assume there are no trivalent vertices. Hence the
$B_{\emptyset}$ summand of the source is homotopically isolated, so we need only keep track of the homological degree $0$
part, which significantly simplifies the calculation. We present one variant in diagram \ref{MM8}.

\begin{figure} [!htbp]
\centerline{
\includegraphics[scale=.8]{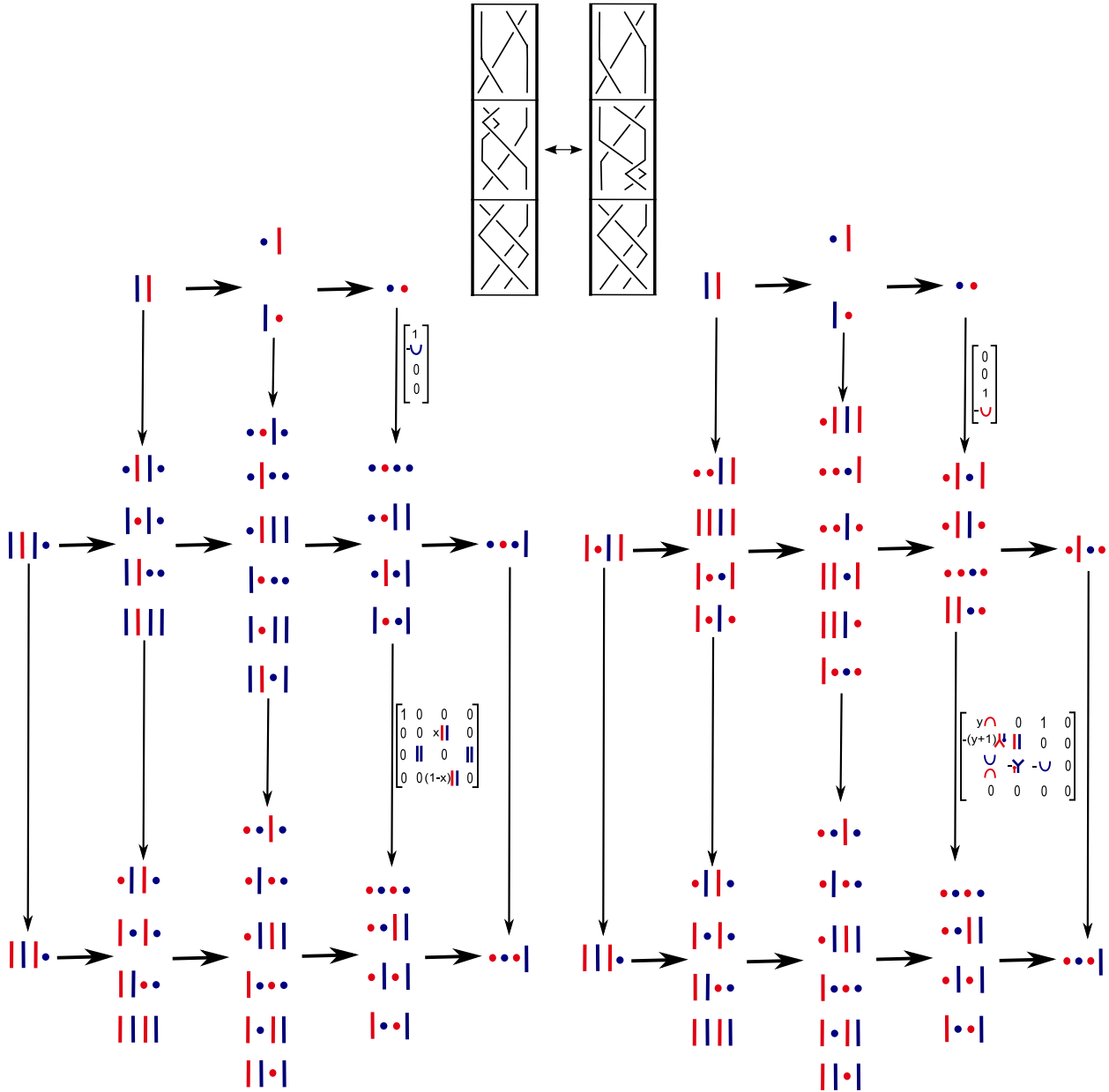}}
\caption{Movie Move $8$} \label{MM8}
\end{figure}

Composing the chain maps for the two sides of MM$8$ we see that they agree on $B_{\emptyset}$.

\pagebreak

\item \textbf{MM9} There are a frightful $96$ versions of MM$9$, coming from all the different R$3$ moves that can be done ($12$ in all), the
type of crossing that appears in the slide, and horizontal and vertical flips. Once again, homotopically isolated summands come to the rescue.
Again, in each variant there are no more than two crossings of a given color, so all maps from $B_{\emptyset}$ to each summand in the target have
non-negative degree. Thus the $B_{\emptyset}$ summand of the source is homotopically isolated. Three colors are involved, the distant color and two
adjacent colors. In the $B_{\emptyset}$ summand, the distant-colored line does not appear, and no application of a distant slide or R3 move can
make it appear. When the distant-colored line does not appear, the distant slide move acts by the identity. Thus both the right and left sides of
the movie act the same way on the $B_{\emptyset}$ summand, namely, they perform the R3 operation to it (sending it to the appropriate summands of
the target).

\begin{figure} [!htbp]
\centerline{
\includegraphics[scale=.7]{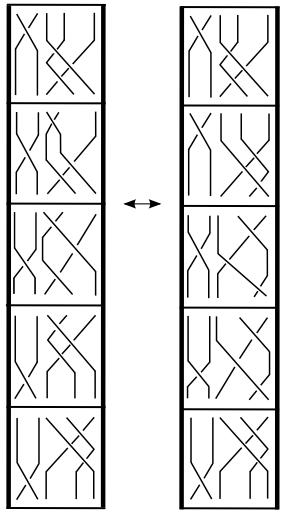}}
\caption{Movie Move $9$} \label{MM9}
\end{figure}

\item \textbf{MM10} The sheer burden of writing down the complexes and calculating the chain maps for even one version of MM$10$ is best avoided
at all costs. Despite at first seeming the more complicated of the movie moves, it is in the end the easiest to verify. We begin noting that,
once one has shown MM$8$, all of the versions of MM$10$ are equivalent (see section 3.2.2 in \cite{CMW}). So let us consider the variant with all left crossings.
We see immediately that the $B_{\emptyset}$ summand is homotopically isolated, that it is the unique summand in homological degree 0 in every
intermediate complex, and that the chain maps all act by the identity in homological degree 0. Hence both sides agree on a homotopically isolated
summand.

\pagebreak

\item \textbf{MM11} There are $32$ variants of MM$11$: 2 choices of crossing, a vertical and a horizontal flip, and the direction of the movie.
Half of these have chain maps that compose to zero on both sides, since the birth of a right crossing or the death of a left crossing is the zero
chain map. The rest are straighforward. We give an example below in figure \ref{MM11}.

\vspace{5mm}

\begin{figure} [!htbp]
\centerline{
\includegraphics[scale=.9]{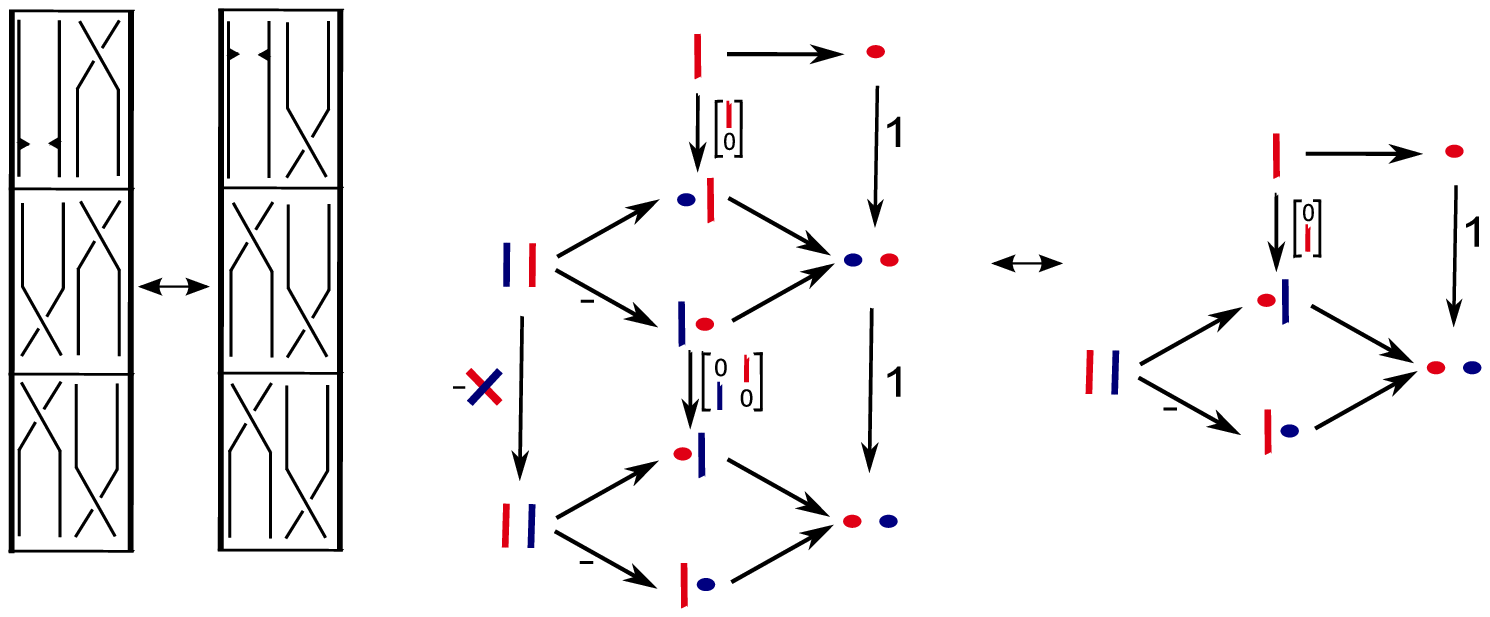}}
\caption{Movie Move $11$} \label{MM11}
\end{figure}

\vspace{5mm}

\item \textbf{MM12} There are $8$ variants: a choice of R2 move, a vertical flip, and the direction of the movie. Again, half of these are zero all around.  Here are two variants; the other two are extremely similar.

\vspace{5mm}

\begin{figure} [!htbp]
\centerline{
\includegraphics[scale=.9]{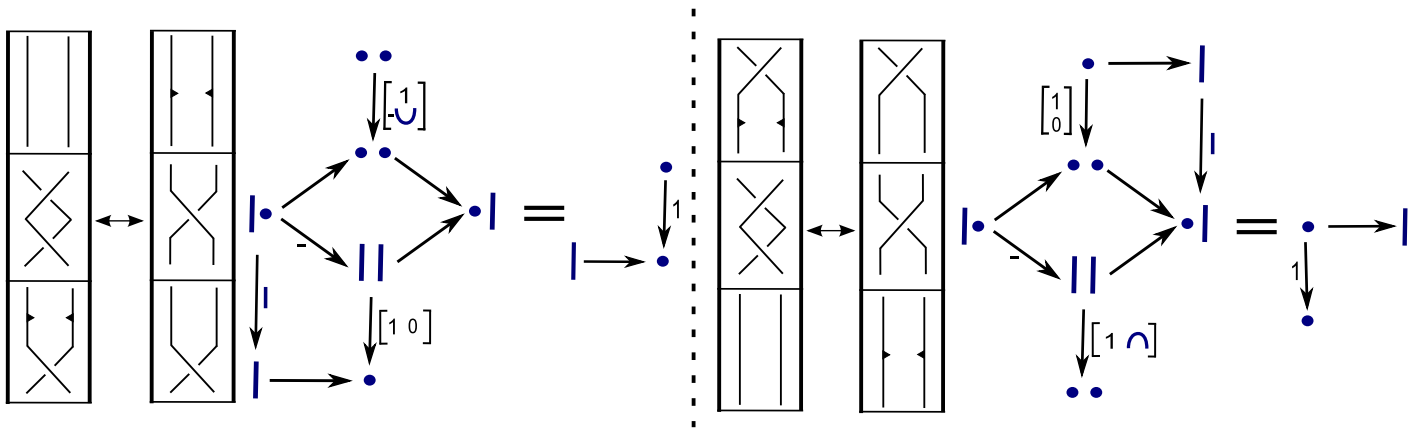}}
\caption{Movie Move $12$} \label{MM12}
\end{figure}

\pagebreak

\item \textbf{MM13} There are $24$ variants: $12$ R$3$ generators and two directions. Half are zero, and color symmetry for R3 generators reduces
the number to check by half again. For the 6 remaining variants, the check requires little more than just writing down the composition, since the
required homotopy in each instance is quite easy to guess. In figure \ref{MM13} we describe the variant associated to to the first R$3$
generator.

\begin{figure} [!htbp]
\centerline{
\includegraphics[scale=.9]{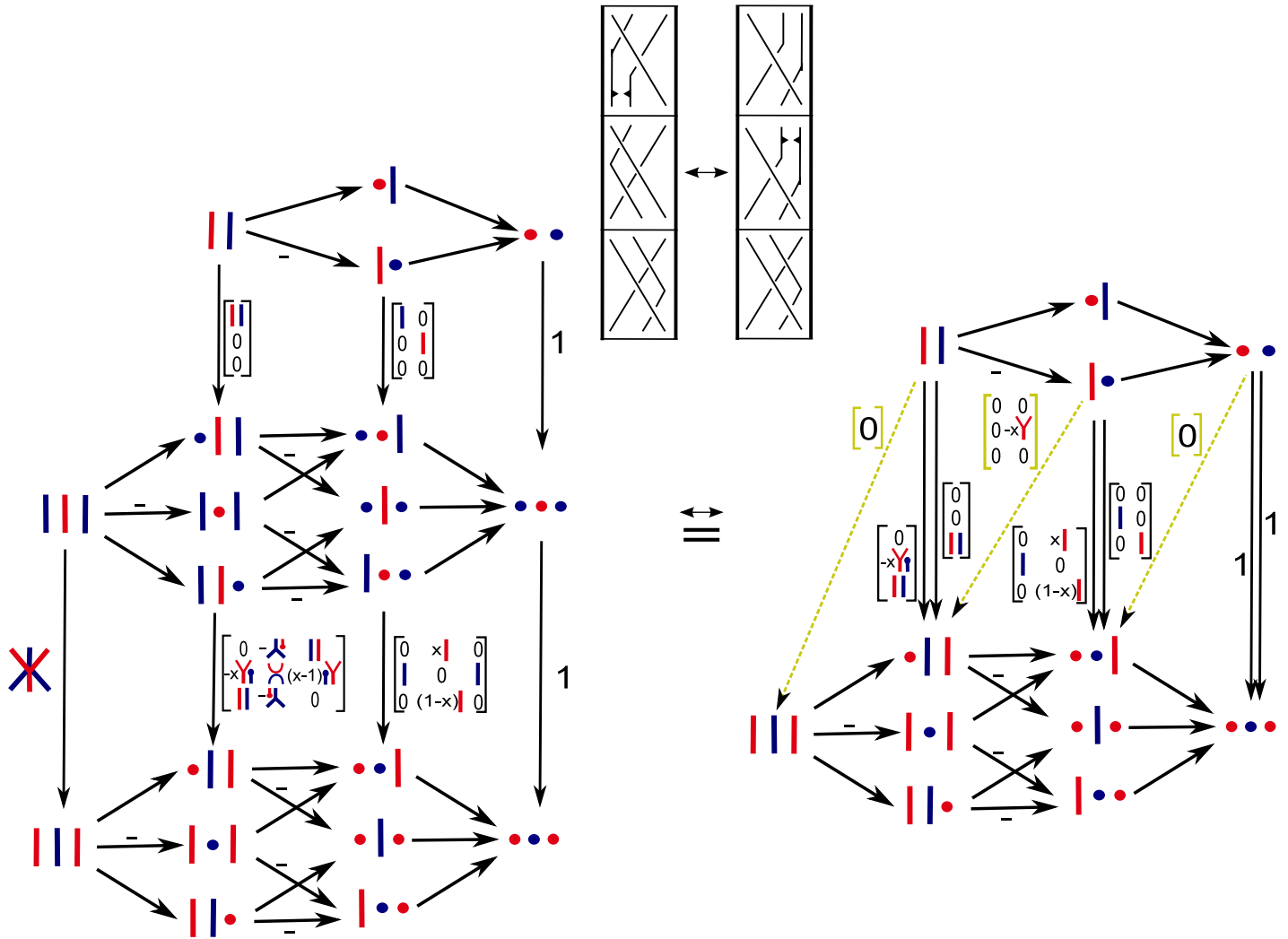}}
\caption{Movie Move $13$} \label{MM13}
\end{figure}

\pagebreak

\item \textbf{MM14} Since none of the R$3$ generators of type $1$ or $2$ is compatible with MM$14$, we are left with $16$ variants: 8 R3
generators and 2 directions. As usual, there are only $4$ to check. In addition to this, the initial frame of the movie corresponds to a complex
supported in homological degree $0$ only, so we only need write down what happens there. In figure \ref{MM14} we describe the variant associated
to the R$3$ generator $3$a.

\begin{figure} [!htbp]
\centerline{
\includegraphics[scale=.8]{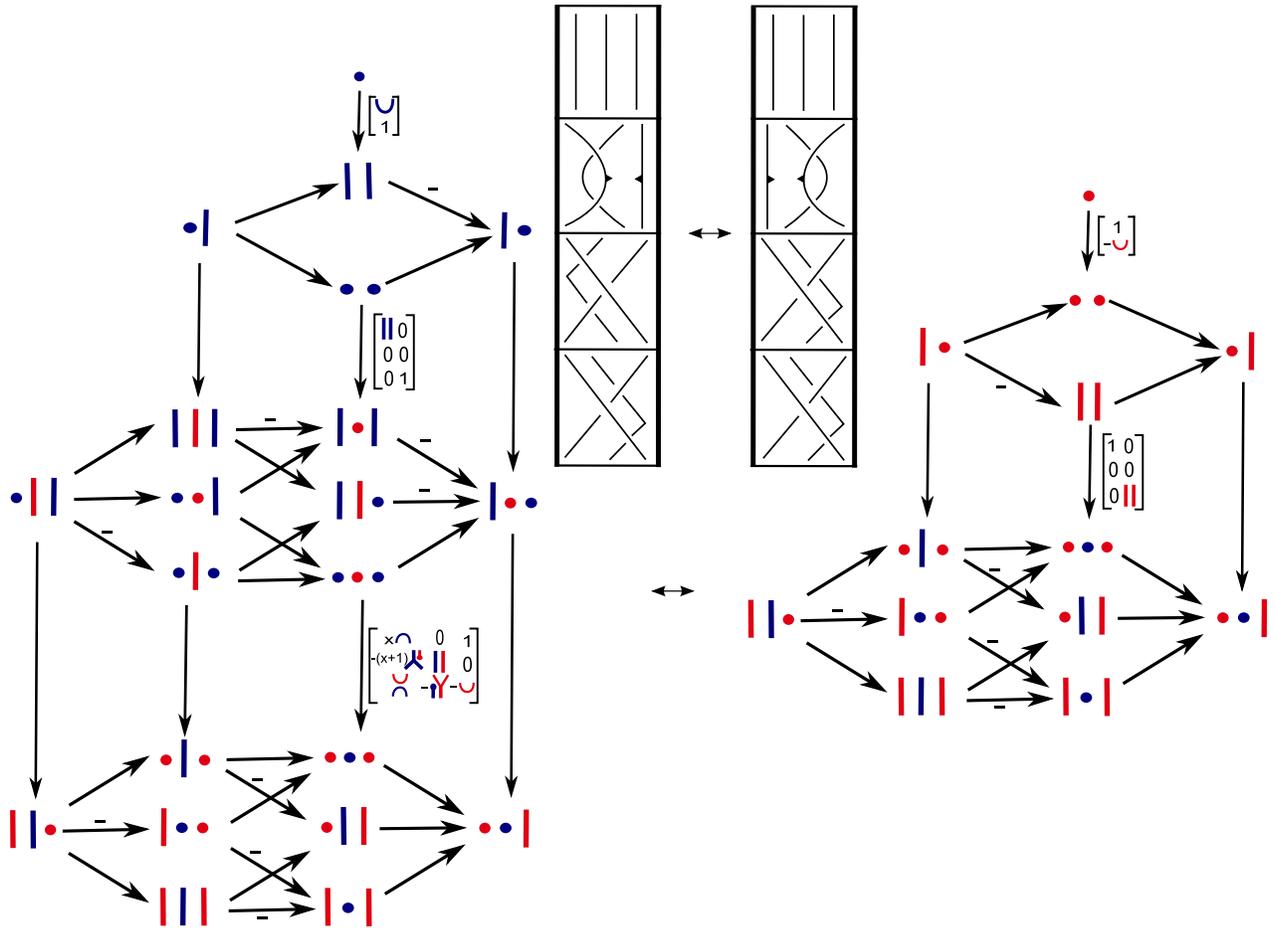}}
\caption{Movie Move $14$} \label{MM14}
\end{figure}

\end{itemize}

\pagebreak

\section{Additional Comments}
\label{sec-additional}

%
\subsection{The Benefits of Brute Force}
\label{subsec-brute}
%

We have now shown that there is a functor from the braid cobordism category into the homotopy category of complexes in $\mc{SC}_2$. Our method of
proof used homotopically isolated summands, and hence relied on the fact that $\Hom(B_{\emptyset},B_{\emptyset})$ was 1-dimensional. This is a
trivial fact in the context of $R$-bimodules, amounting to the statement that $\HOM(R,R)=R$. However, it is a non-trivial fact to prove for the
graphical definition of $\mc{SC}_1$, requiring the more complicated graphical proofs in \cite{EKh}. Moreover, $\Hom(B_{\emptyset},B_{\emptyset})$
need not be 1-dimensional in some arbitrary category $\mc{C}$ of which $\mc{SC}_1$ is a (non-full) subcategory, and we may be interested in such
categories $\mc{C}$. For instance, it would be interesting to define such a category $\mc{C}$ for which one would have all  birth
and death maps nontrivial (although the authors have yet to find an \emph{interesting} extension of this type).

Our method of proof, however, is irrelevant and the truth of Theorem \ref{mainthm} does not depend on it. One could avoid any machinery by
checking each movie move explicitly (in fact, the only ones that remain to be checked are MM8, MM9, and MM10). Checking even a single variant of
MM10 by brute force is extremely tedious, since each complex has 64 summands, but it could be done. In addition, we have actually proven slightly more: for
any additive monoidal category $\mc{C}$ having objects $B_i$ and morphisms satisfying the $\mc{SC}_1$ relations, we can define a functor from the
braid cobordism category into the homotopy category of complexes in $\mc{C}$. This is an obvious corollary, since that same data gives a functor
from $\mc{SC}_2$ to $\mc{C}$. If one chose to change the birth and death maps, the proof for movie moves 1 through 10 would be unchanged, and one
would only need to check 11 through 14.

One other benefit to (theoretically) checking everything by hand is in knowing precisely which coefficients are required, and thus understanding the
dependence on the base ring $\Bbbk$. In all the movie moves we check in this paper, each differential, chain map, and homotopy has integral
coefficients (or free variables which may be chosen to be integral). In fact, every nonzero coefficient that didn't involve a free variable was
$\pm 1$, and free variables may be chosen such that every coefficient is 1,0, or $-1$. From our other calculations, the same should be true for
MM8 through MM10 as well (Khovanov and Thomas \cite{KT} already showed that Rouquier complexes lift over $\Z$ to a projective functor, which implies the
existence of homotopy maps over $\Z$). The next section discusses the definition of this functor in a $\Z$-linear category.

As an additional bonus, checking the movie moves does provide some intuition as to why $\mc{SC}_1$ has the relations that it does. One might wonder why these
particular relations should be correct: in \cite{EKh} we know they are correct because they hold in the $R$-bimodule category and because they are sufficient
to reduce all graphs to a simple form. There should be a more intuitive explanation.

As an illustrative example, consider the overcrossing-only variation of Movie Move 10 and the unique summand of lowest (leftmost)
homological degree: it is a sequence of 6 lines. Then the left hand movie and the right hand movie correspond to the following maps on this
summand:

\igc{.7}{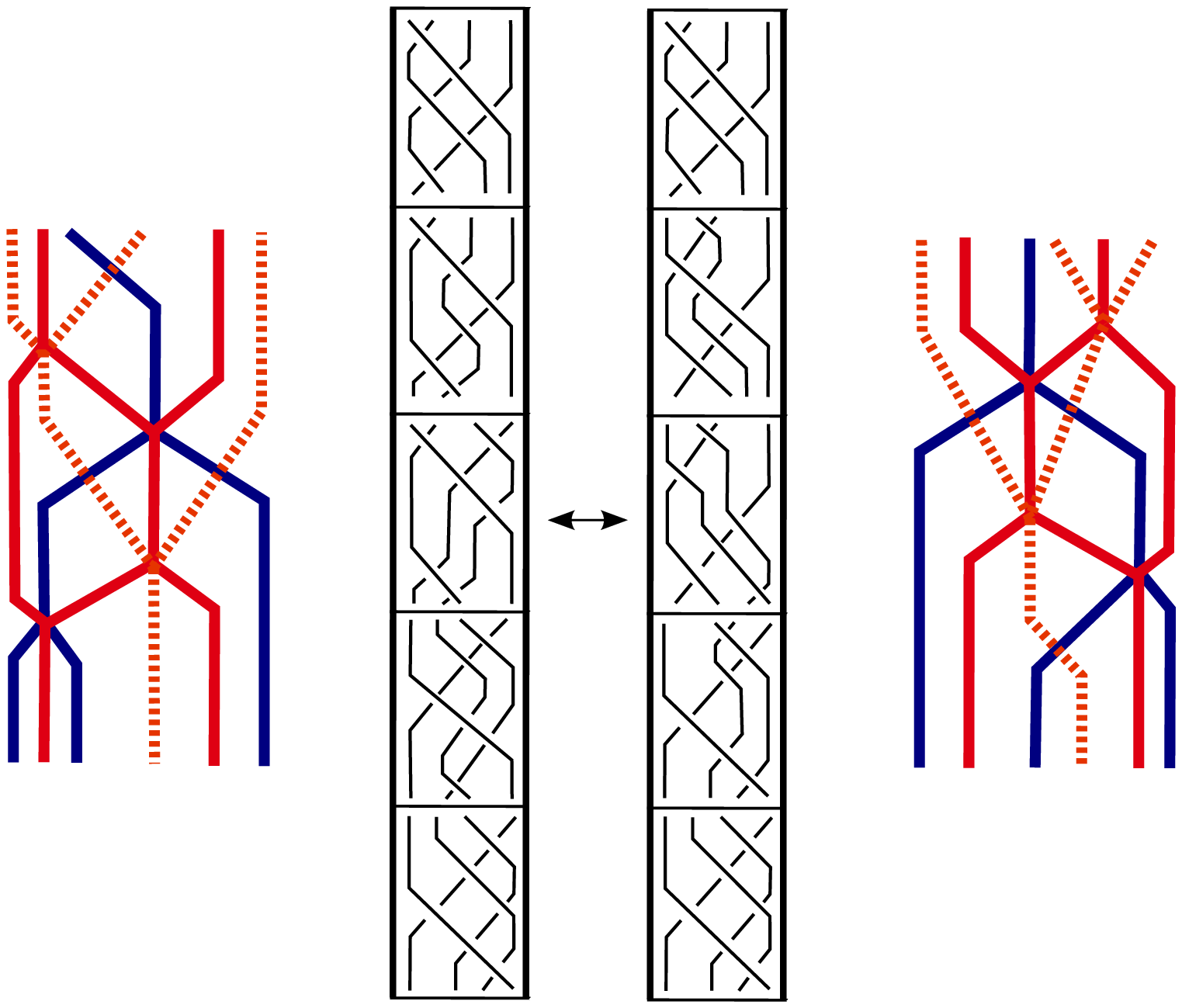}

Thus equality of these two movies on the highest term, modulo relation (\ref{R2}), is exactly relation (\ref{assoc3}).

Similarly, the highest terms in various other movie move variants utilize the other relations, as in the chart below.

\begin{center}
\begin{tabular}{|l|r|} \hline
MM & Relation \\ \hline 
1 & (\ref{twist4}) \\ 
2 & (\ref{twistline}) \\ 
3 & (\ref{R2}) \\ 
4 & (\ref{distslide4}) \\ 
5 & (\ref{needle}) \\
8 & (\ref{twist6}) \\
9 & (\ref{distslide6}) \\
10 & (\ref{assoc3}) \\   \hline 
\end{tabular}
\end{center}

We can view these relations heuristically as planar holograms encoding the equality of cobordisms given by the movie moves.

More relations are used to imply that certain maps are chain maps, or that homotopies work out correctly. For example, relation
(\ref{distslidedot}) is needed for the slide generator to be a chain map. One can go even further, although we shall be purposely vague: so long
as one disallows certain possibilities (like degree $\le 0$ maps from a red line to a blue line, or negative degree endomorphisms of
indecomposable objects) then our graphical generators must exist a priori, and must satisfy a large number of the relations above.

Type II movie moves (11 through 14) do not contribute any relations or requirements not already forced by Type I movie moves (although they do
fix the sign of various generators).

Almost every relation in the calculus is used in a brute force check of functoriality (including the brute force checks of MM8-10). However,
there are two exceptions: (\ref{assoc1}) and (\ref{assoc2}). Both these relations are in degree -2, and degree -2 does not appear in chain maps or
homotopies, so they could not have appeared. Nonetheless, they are effectively implied by the remainder of the relations. It is not hard to use
the rest of the one color relations to show that

\igc{.7}{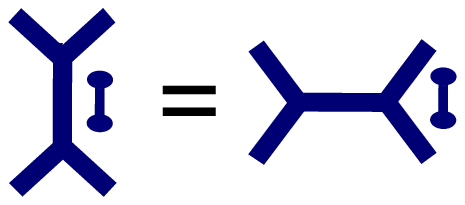}

Hence, (\ref{assoc1}) will hold, so long as $R$ acts freely on morphisms. Under this mild assumption, all the relations are required. While no
proof is presented here, it is safe to say that the category $\mc{SC}_1$ is universal amongst all categories
for which Rouquier complexes could be defined functorially up to Type I movie moves (under suitable conditions on color symmetry and torsion-free
double dot actions), and that these relations are effectively predetermined.

%
\subsection{Working over $\Z$}
\label{subsec-integers}
%

Knot theorists should be interested in a $\Z$-linear version of the Soergel bimodule story, because it could theoretically yield a functorial
link homology theory over $\Z$. We describe the $\Z$-linear version below. Because defining things over $\Z$ is not really the focus of this
paper, and because a thorough discussion would require poring over \cite{EKh} for coefficients, we do not provide rigorous proofs of the
statements in this section.

Ignoring the second equality in (\ref{dotslidenear}), which is equivalent to (\ref{dotslidesame}) after
multiplication by 2, every relation given has coefficients in $\Z$. One could use these relations to define a $\Z$-linear version of $\mc{SC}_1$
and $\mc{SC}_2$, and then use base extension to define the category over any other ring. The functor can easily be defined over $\Z$, as we have
demonstrated, and all the brute force checks work without resorting to other coefficients. Theorem \ref{mainthm} still holds for the $\Z$-linear
version of $\mc{SC}_2$.

In fact, the same method of proof (using homotopically isolated summands) will work over $\Z$ in most contexts. One begins by checking the
isomorphisms (\ref{dc-ii}) through (\ref{dc-ipi}). The only one which is in doubt is $B_i \TenR B_i \cong B_i\{-1\} \oplus B_i\{1\}$. So long as,
for each $i$, there is an adjacent color in $I$, we may use (\ref{iidecomp2}) to check this isomorphism. Otherwise, we are forced to use
(\ref{iidecomp}), which does not have integral coefficients.

For now, assume that adjacent colors are present; we will discuss the other case
below. One still has a map of algebras from $\mc{H}$ to the additive Grothendieck group of $\mc{SC}_1$. A close examination of the methods
used in the last chapter of \cite{EKh} will show that the graphical proofs which classify $\HOM(\emptyset,\ii)$ still work over $\Z$ in this
context. Boundary dots with a polynomial will be a spanning set for morphisms. One can still define a functor into a bimodule category to show
that this spanning set is in fact a basis. Therefore, the Hom space pairing on $\mc{SC}_1$ will induce a semi-linear pairing on $\mc{H}$, and it
will be the same pairing as before. Hom spaces will be free $\Z$-modules of the appropriate graded rank, and this knowledge suffices to use all
the homotopically isolated arguments.

\begin{remark} This statement does \emph{not} imply that $\mc{SC}$ will categorify the Hecke algebra when defined over $\Z$. There may be missing
idempotents, or extra non-isomorphic idempotents, so that the Grothendieck ring of the idempotent completion may be too big or small.
\end{remark}

If adjacent colors are not present, the easiest thing to do to prove Theorem \ref{mainthm} is to include $\mc{SC}_1(I)$ into a larger
$\mc{SC}_1(I^\prime)$ for which adjacent colors are present. Since this inclusion is faithful, all movie move checks which hold for $I^\prime$
will hold for $I$. Alternatively, one could use an extension of the category $\mc{SC}_1(I)$, extending the generating set by adding more
polynomials, either as originally done in \cite{EKh}, or by formally adding $\frac{1}{2}$ times the double dot. Both of these should give an
integral version of the category where the isomorphism (\ref{dc-ii}) holds, and where the graphical proofs of \cite{EKh} still work. Finally, if
one does not mind ignoring 2-torsion, defining the category over $\Z[\frac{1}{2}]$ will also work.


\vspace{5mm}
\hspace{-5mm}\emph{Ben Elias, Department of Mathematics, Columbia University, New York, NY 10027} \\
\vspace{3mm}
E-mail: belias@math.columbia.edu\\

\hspace{-5mm}\emph{Daniel Krasner, Department of Mathematics, Columbia University, New York, NY 10027} \\
\vspace{3mm}
E-mail: dkrasner@math.columbia.edu\\

\end{document}